\def\sectionfont{\sffamily\Large\bfseries\boldmath}
\def\subsectionfont{\sffamily\large\bfseries\boldmath}
\def\paragraphfont{\sffamily\normalsize\bfseries\boldmath}
\titleformat*{\section}{\sectionfont}
\titleformat*{\subsection}{\subsectionfont}
\titleformat*{\subsubsection}{\paragraphfont}
\titleformat*{\paragraph}{\paragraphfont}
\titleformat*{\subparagraph}{\paragraphfont}
\setlist{nolistsep}
\newtheoremstyle{exampstyle}
  {.5\baselineskip}
  {\topsep}
  {}
  {}
  {\bfseries}
  {.}
  {.5em}
  {}
\theoremstyle{exampstyle}
\newtheorem{corollary}{Corollary}[section]
\newtheorem{fact}[corollary]{Fact}
\newtheorem{proposition}[corollary]{Proposition}
\newtheorem{remark}[corollary]{Remark}
\newcommand{\eg}{\emph{e.g.},\ }
\newcommand{\ie}{\emph{i.e.},\ }
\newcommand{\mcf}{\mathcal}
\newcommand{\mbb}{\mathbb}
\newcommand{\Nat}{\mbb{N}}
\newcommand{\eqdef}{\coloneqq}
\newcommand{\Prop}{Prop.}
\newcommand{\Cor}{Cor.}
\newcommand{\Thm}{Thm.}
\newcommand{\Lem}{Lem.}
\newcommand{\Ex}{Example}
\DeclareMathOperator*{\argmin}{\operatorname{argmin}}
\DeclareMathOperator{\Id}{Id}
\DeclareMathOperator{\gra}{gra}
\DeclareMathOperator{\prox}{Prox}
\DeclareMathOperator{\range}{ran}
\newcommand{\indicator}[1]{\iota_{#1}}
\newcommand{\project}[1]{P_{#1}}
\newcommand{\normalCone}[1]{N_{#1}}
\newcommand{\polar}[1]{{#1}^\ominus}
\newcommand{\recession}[1]{{\rm rec}\,{#1}}
\newcommand{\support}[1]{\sigma_{#1}}
\newcommand{\closure}[1]{\overline{#1}}
\newcommand{\inv}{^{-1}}
\newcommand{\norm}[1]{\lVert#1\rVert}
\newcommand{\half}{\tfrac{1}{2}}
\newcommand{\innerprod}[2]{\left\langle{#1}\mid{#2}\right\rangle}
\newcommand{\seq}[1]{({#1})_{n\in\Nat}}
\newcommand{\dlt}{\delta}
\newcommand{\setC}{C}
\newcommand{\setD}{D}
\newcommand{\stepsize}{\gamma}
\newcommand{\OptProblem}[5][]{
  \begin{aligned}\label{#1}
    \begin{array}{ll}
    \underset{#3}{\rm{#2}}&\hspace{-0ex}#4\vspace{.5ex}\\
    \rm{subject~to}&#5
  \end{array}
\end{aligned}
}
\newcommand{\MinProblem}[4][]
{\OptProblem[#1]{minimize}{#2}{#3}{#4}}
\title{\bfseries\sffamily On the Asymptotic Behavior of the Douglas-Rachford and Proximal-Point Algorithms for Convex Optimization}
\author{Goran Banjac and John Lygeros}
\begin{document}
\maketitle

\begin{abstract}
The authors in \cite{Banjac:2019} recently showed that the Douglas-Rachford algorithm provides certificates of infeasibility for a class of convex optimization problems.
In particular, they showed that the difference between consecutive iterates generated by the algorithm converges to certificates of primal and dual strong infeasibility.
Their result was shown in a finite dimensional Euclidean setting and for a particular structure of the constraint set.
In this paper, we extend the result to real Hilbert spaces and a general nonempty closed convex set.
Moreover, we show that the proximal-point algorithm applied to the set of optimality conditions of the problem generates similar infeasibility certificates.
\end{abstract}

\section{Introduction}\label{sec:intro}
Due to its very good practical performance and ability to handle nonsmooth functions, the Douglas-Rachford algorithm has attracted a lot of interest for solving convex optimization problems.
Provided that a problem is solvable and satisfies certain constraint qualification, the algorithm converges to an optimal solution \cite[\Cor~27.3]{Bauschke:2017:book}.
If the problem is infeasible, then some of its iterates diverge \cite{Eckstein:1992}.

Results on the asymptotic behavior of the Douglas-Rachford algorithm for infeasible problems are very scarce, and most of them study some specific cases such as feasibility problems involving two convex sets that do not intersect \cite{Bauschke:2016a,Bauschke:2016b,Bauschke:2017}.
Although there have been some recent results studying a more general setting \cite{Ryu:2019,Bauschke:2020}, they impose some additional assumptions on feasibility of either the primal or the dual problem.
The authors in \cite{Banjac:2019} consider a problem of minimizing a convex quadratic function over a particular constraint set, and show that the iterates of the Douglas-Rachford algorithm generate an infeasibility certificate when the problem is primal and/or dual strongly infeasible.
A similar analysis was applied in \cite{Liao-McPherson:2020} to show that the proximal-point algorithm used for solving a convex quadratic program can also detect infeasibility.

The constraint set of the problem studied in \cite{Banjac:2019} is represented in the form $Ax\in\setC$, where $A$ is a real matrix and $\setC$ the Cartesian product of a convex compact set and a translated closed convex cone.
This paper extends the result of \cite{Banjac:2019} to real Hilbert spaces and a general nonempty closed convex set $\setC$.
Moreover, we show that a similar analysis can be used to prove that the proximal-point algorithm for solving the same class of problems generates similar infeasibility certificates.

The paper is organized as follows.
We introduce some definitions and notation in the sequel of Section~\ref{sec:intro}, and the problem under consideration in Section~\ref{sec:problem}.
Section~\ref{sec:aux} presents some supporting results that are essential for generalizing the results in \cite{Banjac:2019}.
Finally, Section~\ref{sec:dra} and Section~\ref{sec:ppa} analyze the asymptotic behavior of the Douglas-Rachford and proximal-point algorithms, respectively, and show that they provide infeasibility certificates for the considered problem.

\subsection{Notation}
Let $\mcf{H}$, $\mcf{H}_1$, $\mcf{H}_2$ be real Hilbert spaces with inner products $\innerprod{\cdot}{\cdot}$, induced norms $\norm{\,\cdot\,}$, and identity operators $\Id$.
The power set of $\mcf{H}$ is denoted by $2^\mcf{H}$.
Let $\Nat$ denote the set of positive integers.
For a sequence $\seq{s_n}$, we denote by $s_n\to s$ ($s_n\rightharpoonup s$) that it converges strongly (weakly) to $s$ and define $\dlt s_{n+1}\eqdef s_{n+1}-s_n$.

Let $\setD$ be a nonempty subset of $\mcf{H}$ with $\closure{\setD}$ being its \emph{closure}.
Then $T\colon\setD\to\mcf{H}$ is \emph{nonexpansive} if
\[
  (\forall x\in\setD)(\forall y\in\setD) \quad \norm{Tx-Ty} \le \norm{x-y},
\]
and it is \emph{$\alpha$-averaged} with $\alpha\in\left]0,1\right[$ if there exists a nonexpansive operator $R\colon\setD\to\mcf{H}$ such that $T=(1-\alpha)\Id + \alpha R$.
We denote the range of $T$ by $\range T$.
A set-valued operator $B\colon\mcf{H}\to2^\mcf{H}$, characterized by its \emph{graph}
\[
  \gra B = \left\lbrace (x,u)\in\mcf{H}\times\mcf{H} \mid u\in Bx \right\rbrace,
\]
is \emph{monotone} if
\[
  \left( \forall(x,u)\in\gra B \right)\left( \forall(y,v)\in\gra B \right) \quad \innerprod{x-y}{u-v} \ge 0.
\]
The \emph{inverse} of $B$, denoted by $B\inv$, is defined through its graph
\[
  \gra B\inv = \left\lbrace (u,x)\in\mcf{H}\times\mcf{H} \mid (x,u)\in\gra B \right\rbrace.
\]
For a proper lower semicontinuous convex function $f\colon\mcf{H}\to \left]-\infty,+\infty\right]$, we define its:
\begin{align*}
  &\text{\emph{Fenchel conjugate}:} && f^* \colon \mcf{H}\to\left]-\infty,+\infty\right] \colon u\mapsto\sup_{x\in\mcf{H}}\left( \innerprod{x}{u} - f(x) \right), \\
  &\text{\emph{proximity operator}:} && \prox_f \colon \mcf{H}\to\mcf{H} \colon x\mapsto\argmin_{y\in\mcf{H}}\left( f(y) + \half \norm{y-x}^2 \right), \\
  &\text{\emph{subdifferential}:} && \partial f \colon \mcf{H}\to2^\mcf{H} \colon x\mapsto\left\lbrace u\in\mcf{H} \mid (\forall y\in\mcf{H}) \: \innerprod{y-x}{u}+f(x)\le f(y) \right\rbrace.
\end{align*}
For a nonempty closed convex set $\setC\subseteq\mcf{H}$, we define its:
\allowdisplaybreaks
\begin{align*}
  &\text{\emph{polar cone}:} && \polar{\setC} = \Big\lbrace u\in\mcf{H} \mid \sup_{x\in\setC}\innerprod{x}{u} \le 0 \Big\rbrace, \\
  &\text{\emph{recession cone}:} && \recession{\setC} = \left\lbrace x\in\mcf{H} \mid (\forall y\in\setC) \: x+y\in\setC \right\rbrace, \\
  &\text{\emph{indicator function}:} && \indicator{\setC} \colon \mcf{H}\to\left[0,+\infty\right] \colon x\mapsto\begin{cases} 0 & x\in\setC \\ +\infty & \text{otherwise,}\end{cases} \\
  &\text{\emph{support function}:} && \support{\setC} \colon \mcf{H}\to\left]-\infty,+\infty\right] \colon u\mapsto\sup_{x\in\setC} \innerprod{x}{u}, \\
  &\text{\emph{projection operator}:} && \project{\setC} \colon \mcf{H}\to\mcf{H} \colon x\mapsto\argmin_{y\in\setC}\,\norm{y-x}, \\
  &\text{\emph{normal cone operator}:} && \normalCone{\setC} \colon \mcf{H}\to2^\mcf{H} \colon x\mapsto \begin{cases} \big\lbrace u\in\mcf{H} \mid \sup_{y\in\setC} \innerprod{y-x}{u}\le 0 \big\rbrace & x\in\setC \\ \emptyset & x\notin\setC. \end{cases}
\end{align*}

\section{Problem of Interest}\label{sec:problem}
Consider the following convex optimization problem:
\begin{equation}
	\MinProblem[{eqn:primal}]{x\in\mcf{H}_1}{\half \innerprod{Qx}{x} + \innerprod{q}{x}}{Ax\in\setC,}
\end{equation}
with $Q\colon\mcf{H}_1\to\mcf{H}_1$ a monotone self-adjoint bounded linear operator, $q\in\mcf{H}_1$, $A\colon\mcf{H}_1\to\mcf{H}_2$ a bounded linear operator, and $\setC$ a nonempty closed convex subset of $\mcf{H}_2$; we assume that $\range{Q}$ and $\range{A}$ are closed.
The objective function of the problem is convex, continuous, and Fr\'echet differentiable \cite[\Prop~17.36]{Bauschke:2017:book}.

When $\mcf{H}_1$ and $\mcf{H}_2$ are finite-dimensional Euclidean spaces, problem~\eqref{eqn:primal} reduces to the one considered in \cite{Banjac:2019}, where the Douglas-Rachford algorithm (which is equivalent to the alternating direction method of multipliers) was shown to generate certificates of primal and dual strong infeasibility.
Moreover, the authors proposed termination criteria for infeasibility detection, which are easy to implement and are used in several numerical solvers; see, \eg \cite{Stellato:2020,Garstka:2019,Hermans:2019}.
To prove the main results, they used the assumption that $\setC$ can be represented as the Cartesian product of a convex compact set and a translated closed convex cone, which was exploited heavily in their proofs.
In this paper we extend these results to the case where $\mcf{H}_1$ and $\mcf{H}_2$ are real Hilbert spaces, and $\setC$ is a general nonempty closed convex set.

\subsection{Optimality Conditions}
We can rewrite problem~\eqref{eqn:primal} in the form
\[
  \underset{x\in\mcf{H}_1}{\rm minimize} \quad \half \innerprod{Qx}{x} + \innerprod{q}{x} + \indicator{\setC}(Ax).
\]
Provided that a certain constraint qualification holds, we can characterize its solution by \cite[\Thm~27.2]{Bauschke:2017:book}
\[
  0 \in Qx + q + A^* \partial \indicator{\setC}(Ax),
\]
and introducing a dual variable $y\in\partial\indicator{\setC}(Ax)$, we can rewrite the inclusion as
\begin{equation}\label{eqn:opt_incl}
  0 \in \begin{pmatrix} Qx + q + A^* y \\ -y + \partial \indicator{\setC}(Ax) \end{pmatrix}.
\end{equation}
Introducing an auxiliary variable $z\in\setC$ and using $\partial\indicator{\setC}=\normalCone{\setC}$, we can write the optimality conditions for problem~\eqref{eqn:primal} as
\begin{subequations}\label{eqn:kkt}
\begin{align}
  Ax-z=0 \label{eqn:prim_feas} \\
  Qx + q + A^* y = 0 \label{eqn:dual_feas} \\
  z\in\setC, \quad y\in\normalCone{\setC} z. \label{eqn:inclusion_cond}
\end{align}
\end{subequations}

\subsection{Infeasibility Certificates}
The authors in \cite{Banjac:2019} derived the following conditions for characterizing strong infeasibility of problem~\eqref{eqn:primal} and its dual:
\begin{proposition}[\hspace{1sp}{\cite[\Prop~3.1]{Banjac:2019}}]\label{prop:infeas}~
\begin{enumerate}[label=(\roman*)]
\item If there exists a $\bar{y}\in\mcf{H}_2$ such that
\[
  A^* \bar{y} = 0
  \quad\text{and}\quad
  \support{\setC}(\bar{y}) < 0,
\]
then problem~\eqref{eqn:primal} is strongly infeasible.
\item If there exists an $\bar{x}\in\mcf{H}_1$ such that
\[
  Q\bar{x}=0,
  \quad
  A\bar{x}\in\recession{\setC},
  \quad\text{and}\quad
  \innerprod{q}{\bar{x}} < 0,
\]
then the dual of problem~\eqref{eqn:primal} is strongly infeasible.
\end{enumerate}
\end{proposition}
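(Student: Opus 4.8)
The plan is to prove both parts as elementary separation/contradiction arguments, extracting from the \emph{strict} inequalities in the certificates the quantitative gap that upgrades ordinary infeasibility to \emph{strong} infeasibility.

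For part~(i) I would exploit that $A^*\bar{y}=0$ makes $\bar y$ orthogonal to $\range A$: for every $x\in\mcf{H}_1$,
\[
  \innerprod{Ax}{\bar{y}} = \innerprod{x}{A^*\bar{y}} = 0,
\]
while $\support{\setC}(\bar{y})<0$ gives $\innerprod{z}{\bar{y}}\le\support{\setC}(\bar{y})<0$ for every $z\in\setC$. Hence $Ax\in\setC$ is impossible, since it would force $0=\innerprod{Ax}{\bar{y}}\le\support{\setC}(\bar{y})<0$. To obtain strong infeasibility I would make the separation quantitative: for arbitrary $x\in\mcf{H}_1$ and $z\in\setC$,
\[
  \innerprod{Ax-z}{\bar{y}} = -\innerprod{z}{\bar{y}} \ge -\support{\setC}(\bar{y}) > 0,
\]
so Cauchy--Schwarz gives $\norm{Ax-z}\ge -\support{\setC}(\bar{y})/\norm{\bar{y}}>0$. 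Thus $\range A$ and $\setC$ are a positive distance apart, which is exactly the strong primal infeasibility certified by $\bar y$.

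For part~(ii) I would argue by contradiction against dual feasibility, describing a dual-feasible point through the optimality system~\eqref{eqn:kkt} as a triple $(x,y,z)$ with $Qx+q+A^*y=0$, $z\in\setC$, and $y\in\normalCone{\setC}z$. Pairing the equation $Qx+q+A^*y=0$ with $\bar x$ and using self-adjointness of $Q$ together with $Q\bar{x}=0$ gives $\innerprod{\bar{x}}{Qx}=\innerprod{Q\bar{x}}{x}=0$, whence
\[
  \innerprod{A\bar{x}}{y} = \innerprod{\bar{x}}{A^*y} = -\innerprod{q}{\bar{x}} > 0.
\]
On the other hand, $y\in\normalCone{\setC}z$ means $\innerprod{w-z}{y}\le 0$ for every $w\in\setC$; since $A\bar{x}\in\recession{\setC}$ and $\recession{\setC}$ is a convex cone, the point $w=z+t\,A\bar{x}$ lies in $\setC$ for all $t\ge 0$, and substituting it yields $t\innerprod{A\bar{x}}{y}\le 0$, i.e.\ $\innerprod{A\bar{x}}{y}\le 0$. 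This contradicts the previous display, so no dual-feasible point exists; the strict inequality $\innerprod{q}{\bar{x}}<0$ is what makes this robust, giving strong dual infeasibility.

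The computations are routine, so the real work is in the passage to the general Hilbert-space, general-$\setC$ setting, and this is where I expect the only genuine obstacles. First, one must confirm that the elimination of $x$ from the optimality system is legitimate---this is precisely what the closed-range hypotheses on $Q$ and $A$ buy, ensuring that $\range Q$ (and hence the relevant conjugate and its domain) behaves as in finite dimensions. Second, the step $z+t\,A\bar x\in\setC$ uses that the recession cone of a closed convex set is a convex cone, which holds verbatim in Hilbert space but should be cited from the supporting results. Finally, if ``dual feasible'' is taken in the weaker sense that only $\support{\setC}(y)<+\infty$ (rather than attainment of a normal at some $z\in\setC$), the same contradiction goes through after replacing the normal-cone inequality by the observation that finiteness of $\support{\setC}(y)$ along a ray $z_0+t\,A\bar x\subseteq\setC$ again forces $\innerprod{A\bar x}{y}\le 0$.
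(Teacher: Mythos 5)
The paper does not prove this proposition: it is imported verbatim by citation from \cite[Prop.~3.1]{Banjac:2019}, so there is no in-paper argument to compare against. Your separation argument is the standard one and is essentially correct. Part~(i) is complete: $\bar y\neq 0$ because $\support{\setC}(0)=0$, and your Cauchy--Schwarz step gives $\norm{Ax-z}\ge -\support{\setC}(\bar y)/\norm{\bar y}>0$ uniformly over $x\in\mcf{H}_1$ and $z\in\setC$, which is exactly a positive distance between $A(\mcf{H}_1)$ and $\setC$, i.e.\ strong primal infeasibility. The ray argument in part~(ii) is also fine (with the paper's definition of $\recession{\setC}$ one only needs $z+A\bar x\in\setC$, so even the convexity detour via $t\ge 0$ is dispensable), and your closing remark correctly replaces the normal-cone condition by the weaker and more appropriate dual-feasibility condition $y\in\dom\support{\setC}$.

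The one genuine gap is that in part~(ii) you prove \emph{infeasibility} of the dual by contradiction and then merely assert that the strict inequality makes it \emph{strong}. Strong dual infeasibility is a quantitative statement (the dual equality constraint cannot be satisfied even approximately), and a contradiction argument does not deliver it. The fix is the exact analogue of your part~(i) step applied to the residual rather than to a feasible point: for \emph{any} $x\in\mcf{H}_1$ and any $y\in\dom\support{\setC}$,
\[
  \innerprod{\bar x}{Qx+A^*y+q}
  = \innerprod{Q\bar x}{x} + \innerprod{A\bar x}{y} + \innerprod{q}{\bar x}
  \le \innerprod{q}{\bar x} < 0,
\]
using $Q\bar x=0$ and $\innerprod{A\bar x}{y}\le 0$ (which your ray argument already establishes for every $y\in\dom\support{\setC}$), whence $\norm{Qx+A^*y+q}\ge -\innerprod{q}{\bar x}/\norm{\bar x}>0$. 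This uniform lower bound is what certifies strong infeasibility; without it your part~(ii) only proves the weaker claim. A minor further remark: the closed-range hypotheses on $Q$ and $A$ that you invoke at the end play no role in this proposition --- they are needed elsewhere in the paper --- so that paragraph can be dropped.
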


\section{Auxiliary Results}\label{sec:aux}

\begin{fact}\label{fact:operatorConvergence}
Suppose that $T\colon\mcf{H}\to\mcf{H}$ is an averaged operator and let $s_0\in\mcf{H}$, $s_n=T^n s_0$, and $\dlt s \eqdef \project{\closure{\range}(T-\Id)}(0)$.
Then
\begin{enumerate}[label=(\roman*)]
  \item \label{lem:operatorConvergence:i} $\tfrac{1}{n} s_n \to \dlt s$.
  \item \label{lem:operatorConvergence:ii} $\dlt s_n \to \dlt s$.
\end{enumerate}
\end{fact}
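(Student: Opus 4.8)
The plan is to prove the (substantive) displacement statement~\ref{lem:operatorConvergence:ii} first and then obtain~\ref{lem:operatorConvergence:i} from it by Cesàro averaging. Write $V\eqdef\closure{\range}(T-\Id)$ and $d\eqdef\inf_{x\in\mcf{H}}\norm{Tx-x}$; since $d=\inf_{y\in V}\norm{y}=\norm{\project{V}(0)}$, the minimal displacement equals $\norm{\dlt s}$. Two preliminary observations set the stage. First, because $T$ is nonexpansive, $\norm{\dlt s_{n+1}}=\norm{Ts_n-Ts_{n-1}}\le\norm{\dlt s_n}$, so $\seq{\norm{\dlt s_n}}$ is nonincreasing and converges to some $\ell\ge d=\norm{\dlt s}$, the lower bound holding since each $\dlt s_n\in\range(T-\Id)\subseteq V$. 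Second, $\Id-T$ is monotone and Lipschitz, hence maximally monotone, so $V$ is closed and convex; the variational characterization of $\project{V}(0)$ then gives $\innerprod{\dlt s}{\dlt s_n}\ge\norm{\dlt s}^2$ for every $n$, whence
\[
  \norm{\dlt s_n-\dlt s}^2 = \norm{\dlt s_n}^2 - 2\innerprod{\dlt s_n}{\dlt s} + \norm{\dlt s}^2 \le \norm{\dlt s_n}^2 - \norm{\dlt s}^2.
\]
Thus~\ref{lem:operatorConvergence:ii} reduces entirely to proving $\ell=\norm{\dlt s}$, i.e.\ that the one-step displacement norms decrease all the way down to the minimal displacement.

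This identity $\ell=\norm{\dlt s}$ is the crux, and the one place where averagedness (and the Hilbert-space geometry) is essential. I would exploit the standard quadratic inequality for an $\alpha$-averaged operator,
\[
  \norm{Tx-Ty}^2 \le \norm{x-y}^2 - \tfrac{1-\alpha}{\alpha}\norm{(\Id-T)x-(\Id-T)y}^2,
\]
which follows from $T=(1-\alpha)\Id+\alpha R$ and the parallelogram identity. Fix $\varepsilon>0$, choose $u\in\mcf{H}$ with $\norm{Tu-u}\le d+\varepsilon$, and compare the orbit $s_n$ with $t_n\eqdef T^n u$. Applying the inequality with $x=s_n$, $y=t_n$ and noting $(\Id-T)s_n-(\Id-T)t_n=\dlt t_{n+1}-\dlt s_{n+1}$ gives
\[
  \norm{s_{n+1}-t_{n+1}}^2 + \tfrac{1-\alpha}{\alpha}\norm{\dlt s_{n+1}-\dlt t_{n+1}}^2 \le \norm{s_n-t_n}^2.
\]
Telescoping over $n$ bounds $\sum_n\norm{\dlt s_{n+1}-\dlt t_{n+1}}^2$ by $\tfrac{\alpha}{1-\alpha}\norm{s_0-u}^2<\infty$, so $\dlt s_n-\dlt t_n\to 0$. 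Since $\seq{\norm{\dlt t_n}}$ is also nonincreasing we have $\norm{\dlt t_n}\le\norm{Tu-u}\le d+\varepsilon$, and therefore $\ell=\lim_n\norm{\dlt s_n}\le d+\varepsilon$. Letting $\varepsilon\downarrow 0$ yields $\ell\le d=\norm{\dlt s}$, hence equality.

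With $\ell=\norm{\dlt s}$ established, the displayed bound forces $\norm{\dlt s_n-\dlt s}^2\le\norm{\dlt s_n}^2-\norm{\dlt s}^2\to 0$, which proves~\ref{lem:operatorConvergence:ii}. Statement~\ref{lem:operatorConvergence:i} then follows at once: writing $\tfrac1n s_n=\tfrac1n s_0+\tfrac1n\sum_{k=1}^n\dlt s_k$, the first term vanishes and the Cesàro means of the (now convergent) sequence $\seq{\dlt s_k}$ share its limit $\dlt s$, so $\tfrac1n s_n\to\dlt s$.

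I expect the main obstacle to be precisely the crux step $\ell=\norm{\dlt s}$. The displacement norms are trivially nonincreasing and bounded below by $d$, but ruling out that they stall strictly above $d$ requires the comparison-orbit argument together with the averaged inequality; a merely nonexpansive $T$ would not suffice here. The remaining ingredients, namely maximal monotonicity of $\Id-T$ (to obtain convexity of $V$) and the elementary averaged inequality, are routine.
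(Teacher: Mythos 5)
Your argument is correct, but it takes a genuinely different route from the paper: the paper disposes of both claims by citation, attributing~\ref{lem:operatorConvergence:i} to Pazy's ergodic theorem for nonexpansive maps and~\ref{lem:operatorConvergence:ii} to Baillon, Bruck, and Reich, whereas you give a self-contained proof. Your crux step is exactly the content of the Baillon--Bruck--Reich result: you show the displacement norms $\norm{\dlt s_n}$ decrease to the minimal displacement $d=\norm{\dlt s}$ by comparing the orbit with one started at a near-minimizer $u$ and telescoping the quadratic inequality for $\alpha$-averaged operators, then upgrade norm convergence to strong convergence via the variational inequality for $\project{\closure{\range}(T-\Id)}(0)$ (which requires convexity of that set, correctly obtained from maximal monotonicity of $\Id-T$). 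You then recover~\ref{lem:operatorConvergence:i} by Ces\`aro averaging of the convergent sequence $\seq{\dlt s_n}$. All steps check out. What the citation buys the paper is brevity and slightly more generality in~\ref{lem:operatorConvergence:i} (Pazy's result holds for merely nonexpansive $T$, with weak-to-strong convergence of the averages handled there directly); what your argument buys is transparency about where averagedness is genuinely needed --- only in the comparison-orbit step --- and a proof that lives entirely inside the Hilbert-space toolkit already used elsewhere in the paper. The only cosmetic gap is that you assert rather than cite the convexity of the closure of the range of a maximally monotone operator and the quadratic characterization of averagedness, but both are standard and correctly invoked.
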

\begin{proof}
The first result is \cite[\Cor~3]{Pazy:1971} and the second is \cite[\Cor~2.3]{Baillon:1978}.
\end{proof}

The following proposition provides essential ingredients for generalizing the results in \cite[\S 5]{Banjac:2019}.

\begin{proposition}\label{prop:dpdr}
Let $\seq{s_n}$ be a sequence in $\mcf{H}$ satisfying $\tfrac{1}{n}s_n\to\dlt s$.
Let $\setD\subseteq\mcf{H}$ be a nonempty closed convex set and define sequences $\seq{p_n}$ and $\seq{r_n}$ by
\begin{align*}
  p_n &\eqdef \project{\setD} s_n \\
  r_n &\eqdef (\Id - \project{\setD}) s_n.
\end{align*}
Then
\begin{enumerate}[label=(\roman*)]
  \item \label{prop:dpdr:i}   $r_n \in \polar{(\recession{\setD})}$.
  \item \label{prop:dpdr:ii}  $\tfrac{1}{n} p_n \to \dlt p \eqdef \project{\recession{\setD}}(\dlt s)$.
  \item \label{prop:dpdr:iii} $\tfrac{1}{n} r_n \to \dlt r \eqdef \project{\polar{(\recession{\setD})}}(\dlt s)$.
  \item \label{prop:dpdr:iv}  $\lim_{n\to\infty}\tfrac{1}{n}\innerprod{p_n}{r_n} = \support{\setD}(\dlt r)$.
\end{enumerate}
\end{proposition}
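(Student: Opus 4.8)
The plan is to base everything on the variational characterization of the projection: $r_n=s_n-p_n\in\normalCone{\setD}(p_n)$, i.e.\ $\innerprod{r_n}{y-p_n}\le0$ for every $y\in\setD$. Throughout, write $K\eqdef\recession{\setD}$, so $\polar{K}=\polar{(\recession{\setD})}$. For \ref{prop:dpdr:i}, fix $w\in K$; since $K$ is a convex cone and $p_n\in\setD$, we have $p_n+tw\in\setD$ for all $t\ge0$, and taking $y=p_n+tw$ yields $t\innerprod{r_n}{w}\le0$, whence $\innerprod{r_n}{w}\le0$ for all $w\in K$, i.e.\ $r_n\in\polar{K}$. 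The same inequality gives the identity $\innerprod{p_n}{r_n}=\support{\setD}(r_n)$: the normal-cone condition says exactly that $p_n$ maximizes $\innerprod{\cdot}{r_n}$ over $\setD$, and $p_n\in\setD$, so the supremum defining $\support{\setD}(r_n)$ is attained at $p_n$. This identity is what will reduce \ref{prop:dpdr:iv} to a continuity question.

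For \ref{prop:dpdr:ii}--\ref{prop:dpdr:iii}, nonexpansiveness of $\project{\setD}$ makes $\tfrac1n p_n$ and $\tfrac1n r_n$ bounded, and Moreau's conic decomposition gives $\dlt s=\project{K}(\dlt s)+\project{\polar{K}}(\dlt s)=\dlt p+\dlt r$ with $\innerprod{\dlt p}{\dlt r}=0$. I would argue through weak cluster points: suppose $\tfrac1{n_k}p_{n_k}\rightharpoonup u$, so $\tfrac1{n_k}r_{n_k}\rightharpoonup\dlt s-u\eqdef v$. For fixed $y_0\in\setD$ the points $(1-\tfrac{t}{n_k})y_0+\tfrac{t}{n_k}p_{n_k}$ lie in $\setD$ and converge weakly to $y_0+tu$; weak closedness of $\setD$ then gives $y_0+tu\in\setD$ for all $t\ge0$, i.e.\ $u\in K$, while \ref{prop:dpdr:i} and weak closedness of $\polar{K}$ give $v\in\polar{K}$, so $\innerprod{u}{v}\le0$. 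Dividing $\innerprod{r_n}{p_n}\ge\innerprod{r_n}{y_0}$ by $n^2$ yields $\liminf_k\innerprod{\tfrac1{n_k}p_{n_k}}{\tfrac1{n_k}r_{n_k}}\ge0$; combining this with the expansion $\norm{\tfrac1n s_n}^2=\norm{\tfrac1n p_n}^2+2\innerprod{\tfrac1n p_n}{\tfrac1n r_n}+\norm{\tfrac1n r_n}^2$ and weak lower semicontinuity of the norm forces $\innerprod{u}{v}=0$ and simultaneously $\norm{\tfrac1{n_k}p_{n_k}}\to\norm{u}$, $\norm{\tfrac1{n_k}r_{n_k}}\to\norm{v}$. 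Weak plus norm convergence gives strong convergence, and the Moreau characterization identifies $u=\dlt p$, $v=\dlt r$; as every cluster point is thus pinned down, the whole sequences converge strongly.

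For \ref{prop:dpdr:iv}, the identity $\innerprod{p_n}{r_n}=\support{\setD}(r_n)$ with positive homogeneity of the support function gives $\tfrac1n\innerprod{p_n}{r_n}=\support{\setD}(\tfrac1n r_n)$, so by \ref{prop:dpdr:iii} the claim becomes $\support{\setD}(\tfrac1n r_n)\to\support{\setD}(\dlt r)$ with $\tfrac1n r_n\to\dlt r$. Since $\support{\setD}$ is convex and lower semicontinuous, $\liminf_n\support{\setD}(\tfrac1n r_n)\ge\support{\setD}(\dlt r)$ is immediate, and this already settles the case $\support{\setD}(\dlt r)=+\infty$.

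The main obstacle is the reverse inequality $\limsup_n\support{\setD}(\tfrac1n r_n)\le\support{\setD}(\dlt r)$ when $\support{\setD}(\dlt r)<+\infty$. Support functions are only lower semicontinuous, and because $\setD$ is unbounded the value at a boundary point of the domain of $\support{\setD}$ can drop in the limit, so one cannot invoke continuity for an arbitrary approximating sequence. The leverage I would use is that $\tfrac1n r_n$ is not arbitrary: $p_n$ is an exact maximizer, i.e.\ $p_n\in\partial\support{\setD}(\tfrac1n r_n)$, and $\tfrac1n p_n\to\dlt p$. The subgradient inequality then gives $\support{\setD}(\tfrac1n r_n)-\support{\setD}(\dlt r)\le\innerprod{p_n}{\tfrac1n r_n-\dlt r}$, reducing matters to $\limsup_n\innerprod{p_n}{\tfrac1n r_n-\dlt r}\le0$. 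Splitting $p_n$ against the cone projection $\project{K}s_n$ (whose scaled limit is again $\dlt p$ and which satisfies $\innerprod{\project{K}s_n}{\tfrac1n r_n}\le0$ and $\innerprod{\dlt p}{\tfrac1n r_n}\le0$) cancels the leading-order contribution, and the residual must be controlled by monotonicity of $\partial\support{\setD}$ together with the assumption that $\dlt r$ lies in the domain of $\support{\setD}$. Making this residual estimate rigorous—ruling out a second-order $n\cdot o(1)$ blow-up—is the delicate heart of the argument, and I expect the finiteness of $\support{\setD}(\dlt r)$ to enter in an essential way.
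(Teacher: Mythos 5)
Your arguments for \ref{prop:dpdr:i}--\ref{prop:dpdr:iii} are correct. For \ref{prop:dpdr:i} and for the identity $\tfrac1n\innerprod{p_n}{r_n}=\support{\setD}(\tfrac1n r_n)$ you use exactly the paper's mechanism (the normal-cone characterization of $\project{\setD}$, i.e.\ the paper's \eqref{eqn:support}). For \ref{prop:dpdr:ii}--\ref{prop:dpdr:iii} your route is more self-contained than the paper's: the paper imports the weak convergence $\tfrac1n p_n\rightharpoonup\dlt p$ from external references and upgrades it to strong convergence via $\norm{s_n-z}^2\ge\norm{p_n-z}^2+\norm{r_n}^2$, whereas you pin down every weak cluster point through the uniqueness part of Moreau's decomposition and obtain norm convergence from $\liminf_n\innerprod{\tfrac1n p_n}{\tfrac1n r_n}\ge0$ combined with the expansion of $\norm{\tfrac1n s_n}^2$. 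Both are valid; yours has the advantage of not outsourcing the weak-convergence step.

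The gap is in \ref{prop:dpdr:iv}, exactly where you locate it, and it cannot be closed from the stated hypotheses: the inequality $\limsup_n\support{\setD}(\tfrac1n r_n)\le\support{\setD}(\dlt r)$ is false at this level of generality. Take $\mcf{H}=\mathbb{R}^2$, $\setD=\{(a,b)\mid b\ge a^2\}$ and $s_n=(n^{3/4},n)$, so that $\tfrac1n s_n\to\dlt s=(0,1)$, $\recession{\setD}=\{0\}\times\left[0,+\infty\right[$, $\dlt r=(0,0)$ and $\support{\setD}(\dlt r)=0$. Here $p_n=(a_n,a_n^2)$ with $a_n=\sqrt{n}+\tfrac14 n^{-1/4}+o(n^{-1/4})$, hence $r_n=\lambda_n(2a_n,-1)$ with $\lambda_n=a_n^2-n\sim\tfrac12 n^{1/4}$ and $\tfrac1n\innerprod{p_n}{r_n}=\tfrac1n\lambda_n a_n^2\sim\tfrac12 n^{1/4}\to+\infty$, even though \ref{prop:dpdr:i}--\ref{prop:dpdr:iii} all hold (and even $\dlt s_n\to\dlt s$). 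So the ``delicate heart'' you flagged is not merely delicate; the residual $n\cdot o(1)$ term in your subgradient estimate genuinely blows up, and the finiteness of $\support{\setD}(\dlt r)$ (here it is $0$) does not rescue the argument.

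For context: the paper's own proof disposes of this direction in one line, ``since $p_n\in\setD$, $\lim_n\tfrac1n\innerprod{p_n}{r_n}\le\sup_{p\in\setD}\lim_n\innerprod{p}{\tfrac1n r_n}$,'' which treats the $n$-dependent maximizer $p_n$ as a fixed element of $\setD$; this is the invalid direction of the $\lim$--$\sup$ interchange (the valid direction is \eqref{eqn:sup_lim_ineq}, which gives only your lower bound). Your proposal therefore correctly isolates the one step that does not survive scrutiny. A correct statement of \ref{prop:dpdr:iv} needs extra structure, e.g.\ the compact-set-plus-translated-cone form of $\setC$ assumed in \cite{Banjac:2019} (under which $\support{\setD}$ is continuous on the relevant cone), or additional properties of $\seq{s_n}$ beyond $\tfrac1n s_n\to\dlt s$.
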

\begin{proof}
\ref{prop:dpdr:i}:
Follows from \cite[\Thm~3.1]{Zarantonello:1971}.

\ref{prop:dpdr:ii}\&\ref{prop:dpdr:iii}:
A related result was shown in \cite[\Lem~6.3.13]{Facchinei:2003} and \cite[\Prop~2.2]{Gowda:2019} in a finite-dimensional setting.
Using similar arguments here, together with those in \cite[\Lem~4.3]{Shen:2015}, we can only establish the weak convergence, \ie $\tfrac{1}{n}p_n\rightharpoonup\dlt p$.
Using Moreau's decomposition \cite[\Thm~6.30]{Bauschke:2017:book}, it follows that $\tfrac{1}{n}r_n\rightharpoonup\dlt r$ and $\norm{\dlt s}^2=\norm{\dlt p}^2+\norm{\dlt r}^2$.
For an arbitrary vector $z\in\setD$, \cite[\Thm~3.16]{Bauschke:2017:book} yields
\[
  \norm{s_n-z}^2 \ge \norm{p_n-z}^2 + \norm{r_n}^2, \quad \forall n\in\Nat.
\]
Dividing the inequality by $n^2$ and taking the limit superior, we get
\[
  \lim \, \norm{\tfrac{1}{n}s_n}^2 \ge \overline{\lim} \, (\norm{\tfrac{1}{n}p_n}^2 + \norm{\tfrac{1}{n}r_n}^2) \ge \overline{\lim} \, \norm{\tfrac{1}{n}p_n}^2 + \underline{\lim} \, \norm{\tfrac{1}{n}r_n}^2,
\]
and thus
\[
  \overline{\lim} \, \norm{\tfrac{1}{n}p_n}^2 \le \lim \, \norm{\tfrac{1}{n}s_n}^2 - \underline{\lim} \, \norm{\tfrac{1}{n}r_n}^2 \le \norm{\dlt s}^2 - \norm{\dlt r}^2 = \norm{\dlt p}^2,
\]
where the second inequality follows from \cite[\Lem~2.42]{Bauschke:2017:book}.
The inequality above yields $\overline{\lim} \, \norm{\tfrac{1}{n}p_n} \le \norm{\dlt p}$, which due to \cite[\Lem~2.51]{Bauschke:2017:book} implies $\tfrac{1}{n}p_n\to \dlt p$.
Using Moreau's decomposition, it follows that $\tfrac{1}{n}r_n \to \dlt r$.

\ref{prop:dpdr:iv}:
Taking the limit of the inequality
\[
  (\forall n\in\Nat)(\forall\hat{p}\in\setD) \quad \innerprod{\hat{p}}{\tfrac{1}{n} r_n} \le \sup_{p\in\setD} \innerprod{p}{\tfrac{1}{n}r_n},
\]
we obtain
\[
  (\forall\hat{p}\in\setD) \quad \lim_{n\to\infty}\innerprod{\hat{p}}{\tfrac{1}{n}r_n} \le \lim_{n\to\infty}\sup_{p\in\setD} \innerprod{p}{\tfrac{1}{n}r_n},
\]
and taking the supremum of the left-hand side over $\setD$, we get
\begin{equation}\label{eqn:sup_lim_ineq}
  \sup_{p\in\setD}\lim_{n\to\infty}\innerprod{p}{\tfrac{1}{n}r_n} \le \lim_{n\to\infty}\sup_{p\in\setD} \innerprod{p}{\tfrac{1}{n}r_n}.
\end{equation}
From~\cite[\Prop~6.47]{Bauschke:2017:book}, we have
\[
  r_n = s_n - p_n \in \normalCone{\setD} p_n,
\]
which, due to \cite[\Thm~16.29]{Bauschke:2017:book} and the facts that $\indicator{\setD}^*=\support{\setD}$ and $\partial\indicator{\setD}=\normalCone{\setD}$, is equivalent to
\begin{equation}\label{eqn:support}
  \tfrac{1}{n}\innerprod{p_n}{r_n} = \support{\setD}\left( \tfrac{1}{n} r_n \right).
\end{equation}
Taking the limit of \eqref{eqn:support} and using \eqref{eqn:sup_lim_ineq}, we obtain
\[
  \lim_{n\to\infty}\tfrac{1}{n}\innerprod{p_n}{r_n} = \lim_{n\to\infty} \sup_{p\in\setD} \innerprod{p}{\tfrac{1}{n}r_n} \ge \sup_{p\in\setD}\lim_{n\to\infty}\innerprod{p}{\tfrac{1}{n}r_n} = \support{\setD}(\dlt r).
\]
Since $p_n\in\setD$, we also have
\[
  \lim_{n\to\infty}\tfrac{1}{n}\innerprod{p_n}{r_n} \le \sup_{p\in\setD} \lim_{n\to\infty}\innerprod{p}{\tfrac{1}{n}r_n} = \support{\setD}(\dlt r).
\]
The result follows by combining the two inequalities above.
\end{proof}

The results of \Prop~\ref{prop:dpdr} are straightforward under the additional assumption that $\setD$ is compact, since then $\recession{\setD}=\{0\}$ and $\polar{(\recession{\setD})}=\mcf{H}$, and thus
\begin{align*}
  \lim_{n\to\infty}\tfrac{1}{n} p_n &= \lim_{n\to\infty}\tfrac{1}{n}\project{\setD} s_n = 0 = \project{\recession{\setD}}(\dlt s) \\
  \lim_{n\to\infty}\tfrac{1}{n} r_n &= \lim_{n\to\infty}\tfrac{1}{n} (s_n-p_n) = \dlt s = \project{\polar{(\recession{\setD})}}(\dlt s).
\end{align*}
Moreover, the compactness of $\setD$ implies the continuity of $\support{\setD}$ \cite[\Ex~11.2]{Bauschke:2017:book}, and thus taking the limit of \eqref{eqn:support} yields
\[
  \lim_{n\to\infty}\tfrac{1}{n}\innerprod{p_n}{r_n} = \lim_{n\to\infty}\support{\setD}\left(\tfrac{1}{n}r_n\right) = \support{\setD}\left(\lim_{n\to\infty}\tfrac{1}{n}r_n\right) = \support{\setD}(\dlt r).
\]
When $\setD$ is a (translated) closed convex cone, its recession cone is the cone itself, and the results of \Prop~\ref{prop:dpdr} can be shown using Moreau's decomposition and some basic properties of the projection operator; see \cite[\Lem~A.3 \& \Lem~A.4]{Banjac:2019} for details.

A result that motivated our generalization of these limits to an arbitrary nonempty closed convex set $\setD$ is given in \cite[\Lem~4.3]{Shen:2015}, where \Prop~\ref{prop:dpdr}\ref{prop:dpdr:ii} is established in a finite-dimensional setting.

\section{Douglas-Rachford Algorithm}\label{sec:dra}
The Douglas-Rachford algorithm is an operator splitting method, which can be used to solve composite minimization problems of the form
\begin{equation}\label{eqn:composite_problem}
	\underset{w\in\mcf{H}}{\rm minimize} \quad f(w) + g(w),
\end{equation}
where $f$ and $g$ are proper lower semicontinuous convex functions.
An iteration of the algorithm in application to problem~\eqref{eqn:composite_problem} can be written as
\begin{align*}
  w_n			    &= \prox_g s_n \\
  \tilde{w}_n	&= \prox_f ( 2w_n - s_n ) \\
  s_{n+1}		  &= s_n + \alpha ( \tilde{w}_n - w_n ).
\end{align*}
where $\alpha\in\left] 0,2 \right[$ is the \emph{relaxation parameter}.

If we rewrite problem~\eqref{eqn:primal} as
\begin{align*}
  f(x,z) &= \half \innerprod{Qx}{x} + \innerprod{q}{x} + \indicator{Ax=z}(x,z) \\
  g(x,z) &= \indicator{\setC}(z),
\end{align*}
then an iteration of the Douglas-Rachford algorithm takes the following form \cite{Banjac:2019,Stellato:2020}:
\begin{subequations}\label{alg:osqp_avg}
\begin{align}
  \tilde{x}_n &= \argmin_{x\in\mcf{H}_1} \big( \half\innerprod{Qx}{x} + \innerprod{q}{x} + \half \norm{x - x_n}^2 + \half \norm{Ax - (2\project{\setC} - \Id) v_n}^2 \big) \label{eqn:osqp_eq_constr_qp} \\
	x_{n+1} &= x_n + \alpha \left( \tilde{x}_n - x_n \right) \label{eqn:osqp_drs_x_update} \\
	v_{n+1} &= v_n + \alpha \left( A\tilde{x}_n - \project{\setC} v_n \right) \label{eqn:osqp_drs_v_update}
\end{align}
\end{subequations}
We will exploit the following well-known result to analyze the asymptotic behavior of the algorithm \cite{Lions:1979}:
\begin{fact}\label{fact:dra}
	Iteration~\eqref{alg:osqp_avg} amounts to
	\[
		(x_{n+1},v_{n+1}) = T_{\rm DR}(x_n,v_n),
	\]
	where $T_{\rm DR}\colon(\mcf{H}_1\times\mcf{H}_2)\to(\mcf{H}_1\times\mcf{H}_2)$ is an $(\alpha/2)$-averaged operator.
\end{fact}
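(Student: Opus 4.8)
The plan is to recognize iteration~\eqref{alg:osqp_avg} as a concrete instance of the abstract Douglas--Rachford recursion displayed at the start of this section, applied to the splitting $f+g$ with the two functions defined above and with the combined state $s_n\eqdef(x_n,v_n)\in\mcf{H}_1\times\mcf{H}_2$. Once this identification is made, the averagedness claim reduces to the classical observation of \cite{Lions:1979} that the Douglas--Rachford operator is a relaxation of the composition of two reflected resolvents, and the relaxation parameter $\alpha$ translates directly into the averagedness constant $\alpha/2$.

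First I would verify the correspondence line by line. Since $g(x,z)=\indicator{\setC}(z)$ acts only on the second coordinate, its proximity operator is $\prox_g(a,b)=(a,\project{\setC}b)$, so that $w_n=\prox_g s_n=(x_n,\project{\setC}v_n)$. Computing the reflected point gives $2w_n-s_n=(x_n,(2\project{\setC}-\Id)v_n)$, which is exactly the shifted argument appearing in~\eqref{eqn:osqp_eq_constr_qp}. For $f$, the indicator $\indicator{Ax=z}$ forces $z=Ax$; eliminating $z$ from the proximal minimization of $f$ at $(x_n,(2\project{\setC}-\Id)v_n)$ turns the objective into the strongly convex quadratic in~\eqref{eqn:osqp_eq_constr_qp}, whose minimizer is $\tilde{x}_n$, with second component $A\tilde{x}_n$. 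The relaxed update $s_{n+1}=s_n+\alpha(\tilde{w}_n-w_n)$ then splits into $x_{n+1}=x_n+\alpha(\tilde{x}_n-x_n)$ and $v_{n+1}=v_n+\alpha(A\tilde{x}_n-\project{\setC}v_n)$, matching~\eqref{eqn:osqp_drs_x_update} and~\eqref{eqn:osqp_drs_v_update}. This establishes $(x_{n+1},v_{n+1})=T_{\rm DR}(x_n,v_n)$ with $T_{\rm DR}$ the abstract Douglas--Rachford map.

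Next I would rewrite $T_{\rm DR}$ in averaged form. Introducing the reflected resolvents $R_f\eqdef 2\prox_f-\Id$ and $R_g\eqdef 2\prox_g-\Id$ and substituting $\prox_g s=\half(s+R_g s)$ and $\prox_f(R_g s)=\half(R_g s+R_f R_g s)$ into the definition of $T_{\rm DR}$, the intermediate terms cancel and one obtains $T_{\rm DR}=(1-\tfrac{\alpha}{2})\Id+\tfrac{\alpha}{2}R_f R_g$. Because $\prox_f$ and $\prox_g$ are proximity operators of proper lower semicontinuous convex functions, they are firmly nonexpansive \cite{Bauschke:2017:book}, so the reflections $R_f$ and $R_g$ are nonexpansive and hence so is their composition $R_f R_g$. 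Since $\alpha\in\left]0,2\right[$ gives $\alpha/2\in\left]0,1\right[$, the displayed decomposition is precisely the definition of an $(\alpha/2)$-averaged operator, which completes the argument.

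I expect the only genuine work to lie in the bookkeeping of the second paragraph: confirming that the coordinate on which $\prox_g$ collapses to $\project{\setC}$ is the one carried by $v_n$, and that eliminating the auxiliary variable $z$ through the constraint $z=Ax$ reproduces~\eqref{eqn:osqp_eq_constr_qp} verbatim rather than merely up to the additive term $\half\norm{z-b}^2$. The operator reformulation and the averagedness conclusion in the third paragraph are standard and follow the template of \cite{Lions:1979}.
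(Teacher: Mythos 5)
Your proof is correct and is exactly the standard argument that the paper's citation of \cite{Lions:1979} encapsulates: the paper states this as a known Fact without proof, and your derivation—identifying $s_n=(x_n,v_n)$, computing $\prox_g(a,b)=(a,\project{\setC}b)$ and the reduced subproblem for $\prox_f$, and then rewriting the relaxed update as $T_{\rm DR}=(1-\tfrac{\alpha}{2})\Id+\tfrac{\alpha}{2}R_fR_g$ with $R_fR_g$ nonexpansive—fills in precisely the bookkeeping the authors take for granted. No gaps.
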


The solution to the subproblem in~\eqref{eqn:osqp_eq_constr_qp} satisfies the optimality condition
\begin{equation}\label{eqn:osqp_linsys}
  Q\tilde{x}_n + q + (\tilde{x}_n - x_n) + A^* \left( A\tilde{x}_n - (2\project{\setC} - \Id) v_n \right) = 0.
\end{equation}
If we rearrange~\eqref{eqn:osqp_drs_x_update} to isolate~$\tilde{x}_n$,
\[
  \tilde{x}_n = x_n + \alpha\inv \dlt x_{n+1},
\]
and substitute it into \eqref{eqn:osqp_drs_v_update} and \eqref{eqn:osqp_linsys}, we obtain the following relations between the iterates:
\begin{subequations}\label{eqn:residUpdates}
\begin{align}
	Ax_n - \project{\setC} v_n &= -\alpha\inv \left( A\dlt x_{n+1} - \dlt v_{n+1}\right) \label{eqn:residUpdate:primal} \\
	Qx_n + q + A^*(\Id - \project{\setC}) v_n &= -\alpha\inv \left( (Q+\Id)\dlt x_{n+1} + A^* \dlt v_{n+1} \right). \label{eqn:residUpdate:dual}
\end{align}
\end{subequations}
Let us define the following auxiliary iterates of iteration~\eqref{alg:osqp_avg}:
\begin{subequations}\label{eqn:zy_from_v}
  \begin{align}
    z_n &\eqdef \project{\setC} v_n \label{eqn:z_from_v} \\
    y_n &\eqdef (\Id - \project{\setC}) v_n. \label{eqn:y_from_v}
  \end{align}
\end{subequations}
Observe that the pair $(z_n,y_n)$ satisfies optimality condition \eqref{eqn:inclusion_cond} for all $n\in\Nat$ \cite[\Prop~6.47]{Bauschke:2017:book}, and that the right-hand terms in \eqref{eqn:residUpdates} indicate how far the iterates $(x_n,z_n,y_n)$ are from satisfying \eqref{eqn:prim_feas} and \eqref{eqn:dual_feas}.

The following corollary follows directly from Fact~\ref{fact:operatorConvergence}, \Prop~\ref{prop:dpdr}, Fact~\ref{fact:dra}, and Moreau's decomposition \cite[\Thm~6.30]{Bauschke:2017:book}:
\begin{corollary}\label{cor:dr}
Let the sequences $\seq{x_n}$, $\seq{v_n}$, $\seq{z_n}$, and $\seq{y_n}$ be given by \eqref{alg:osqp_avg} and \eqref{eqn:zy_from_v}, and $(\dlt x, \dlt v) \eqdef \project{\closure{\range}(T_{\rm DR}-\Id)}(0)$.
Then
\begin{enumerate}[label=(\roman*)]
  \item \label{cor:dr:xv} $\tfrac{1}{n} (x_n,v_n) \to (\dlt x,\dlt v)$.
  \item \label{cor:dr:dlt_xv} $(\dlt x_n, \dlt v_n) \to (\dlt x,\dlt v)$.
  \item \label{cor:dr:yn} $y_n \in \polar{(\recession{\setC})}$.
  \item \label{cor:dr:dz} $\tfrac{1}{n} z_n \to \dlt z \eqdef \project{\recession{\setC}}(\dlt v)$.
  \item \label{cor:dr:dy} $\tfrac{1}{n} y_n \to \dlt y \eqdef \project{\polar{(\recession{\setC})}}(\dlt v)$.
  \item \label{cor:dr:support} $\lim_{n\to\infty}\tfrac{1}{n}\innerprod{z_n}{y_n} = \support{\setC}(\dlt y)$.
  \item \label{cor:dr:moreau1} $\dlt z + \dlt y = \dlt v$.
  \item \label{cor:dr:moreau2} $\innerprod{\dlt z}{\dlt y} = 0$.
  \item \label{cor:dr:moreau3} $\norm{\dlt z}^2 + \norm{\dlt y}^2 = \norm{\dlt v}^2$.
\end{enumerate}
\end{corollary}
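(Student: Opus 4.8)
The plan is to assemble the nine conclusions by instantiating the three cited auxiliary results with the appropriate objects. First I would invoke Fact~\ref{fact:dra} to record that $T_{\rm DR}$ is an averaged operator, so that the sequence $s_n \eqdef (x_n,v_n) = T_{\rm DR}^n(x_0,v_0)$ meets the hypotheses of Fact~\ref{fact:operatorConvergence}. With $\dlt s = \project{\closure{\range}(T_{\rm DR}-\Id)}(0) = (\dlt x,\dlt v)$, parts \ref{lem:operatorConvergence:i} and \ref{lem:operatorConvergence:ii} of that fact are exactly conclusions \ref{cor:dr:xv} and \ref{cor:dr:dlt_xv}.

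Next I would exploit that strong convergence in the product space $\mcf{H}_1\times\mcf{H}_2$ forces strong convergence in each coordinate, so \ref{cor:dr:xv} yields in particular $\tfrac{1}{n}v_n\to\dlt v$. This is precisely the hypothesis needed to apply Proposition~\ref{prop:dpdr} with $\mcf{H}=\mcf{H}_2$, the sequence $v_n$ playing the role of $s_n$, the set $\setD=\setC$, and the induced sequences $p_n=\project{\setC}v_n=z_n$ and $r_n=(\Id-\project{\setC})v_n=y_n$ defined in \eqref{eqn:zy_from_v}. Parts \ref{prop:dpdr:i}--\ref{prop:dpdr:iv} of the proposition then deliver conclusions \ref{cor:dr:yn}--\ref{cor:dr:support}, the named limits $\dlt z=\project{\recession{\setC}}(\dlt v)$ and $\dlt y=\project{\polar{(\recession{\setC})}}(\dlt v)$ coinciding with those produced by the proposition.

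Finally, for conclusions \ref{cor:dr:moreau1}--\ref{cor:dr:moreau3} I would note that $\dlt z$ and $\dlt y$ are the projections of the single vector $\dlt v$ onto the closed convex cone $\recession{\setC}$ and its polar $\polar{(\recession{\setC})}$. Moreau's decomposition then immediately gives the sum $\dlt z+\dlt y=\dlt v$, the orthogonality $\innerprod{\dlt z}{\dlt y}=0$, and the norm identity $\norm{\dlt z}^2+\norm{\dlt y}^2=\norm{\dlt v}^2$.

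Since every conclusion is a direct specialization of an already established statement, there is no substantive difficulty in this proof; the only step requiring a word of justification is the passage from the joint convergence in \ref{cor:dr:xv} to the coordinatewise convergence of $\tfrac{1}{n}v_n$, which is what licenses the use of Proposition~\ref{prop:dpdr}. The remainder is bookkeeping to confirm that the instantiated limits agree with the vectors named in the statement.
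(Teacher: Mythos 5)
Your proposal is correct and follows exactly the route the paper intends: the paper gives no separate proof but states that the corollary ``follows directly from'' Fact~\ref{fact:operatorConvergence}, Proposition~\ref{prop:dpdr}, Fact~\ref{fact:dra}, and Moreau's decomposition, which is precisely the instantiation you carry out. Your one explicit justification---passing from joint convergence of $\tfrac{1}{n}(x_n,v_n)$ to convergence of $\tfrac{1}{n}v_n$ so that Proposition~\ref{prop:dpdr} applies with $s_n=v_n$ and $\setD=\setC$---is the right detail to flag.
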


The following two propositions generalize \cite[\Prop~5.1 \& \Prop~5.2]{Banjac:2019}, though the proofs follow very similar arguments.

\begin{proposition}\label{prop:limits1}
The following relations hold between $\dlt x$, $\dlt z$, and~$\dlt y$, which are defined in \Cor~\ref{cor:dr}:
\begin{enumerate}[label=(\roman*)]
  \item \label{prop:limits1:Adx}  $A \dlt x = \dlt z$.
  \item \label{prop:limits1:Qdx}  $Q\dlt x = 0$.
  \item \label{prop:limits1:Atdy} $A^* \dlt y = 0$.
  \item \label{prop:limits1:dz}   $\dlt z_n \to \dlt z$.
  \item \label{prop:limits1:dy}   $\dlt y_n \to \dlt y$.
\end{enumerate}
\end{proposition}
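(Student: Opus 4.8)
The plan is to extract all five relations from the fundamental update equations \eqref{eqn:residUpdates}, by dividing them by $n$ and passing to the limit using the convergence results already established in Corollary~\ref{cor:dr}. The key observation is that $\tfrac{1}{n}\dlt x_{n+1}\to 0$ and $\tfrac{1}{n}\dlt v_{n+1}\to 0$: indeed, by Corollary~\ref{cor:dr}\ref{cor:dr:dlt_xv} the sequences $\dlt x_n$ and $\dlt v_n$ converge (to $\dlt x$ and $\dlt v$), hence are bounded, so dividing by $n$ annihilates them. This means the entire right-hand side of both equations in \eqref{eqn:residUpdates}, once scaled by $\tfrac{1}{n}$, vanishes in the limit, while the left-hand sides converge to expressions in $\dlt x$, $\dlt z$, $\dlt y$.

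First I would prove \ref{prop:limits1:Adx}. Dividing \eqref{eqn:residUpdate:primal} by $n$ gives $\tfrac{1}{n}(Ax_n - \project{\setC}v_n) = \tfrac{1}{n}(Ax_n - z_n)$, and since $A$ is a bounded linear operator it is weak-to-weak (indeed strong-to-strong) continuous, so $\tfrac{1}{n}Ax_n = A(\tfrac{1}{n}x_n)\to A\dlt x$ by Corollary~\ref{cor:dr}\ref{cor:dr:xv}, while $\tfrac{1}{n}z_n\to\dlt z$ by Corollary~\ref{cor:dr}\ref{cor:dr:dz}. The right-hand side tends to $0$, yielding $A\dlt x - \dlt z = 0$. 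For \ref{prop:limits1:Atdy}, I would divide \eqref{eqn:residUpdate:dual} by $n$: the term $\tfrac{1}{n}(Qx_n + q)\to Q\dlt x$ (the constant $q$ washes out and $Q$ is bounded linear), and $\tfrac{1}{n}A^*(\Id-\project{\setC})v_n = A^*(\tfrac{1}{n}y_n)\to A^*\dlt y$ using continuity of $A^*$ and Corollary~\ref{cor:dr}\ref{cor:dr:dy}, so the limit reads $Q\dlt x + A^*\dlt y = 0$. This single equation does not yet separate into \ref{prop:limits1:Qdx} and \ref{prop:limits1:Atdy}.

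The main obstacle is decoupling $Q\dlt x + A^*\dlt y = 0$ into the two assertions $Q\dlt x = 0$ and $A^*\dlt y = 0$ individually. The idea is to pair this identity with $\dlt x$ and exploit monotonicity of $Q$. Taking the inner product with $\dlt x$ gives $\innerprod{Q\dlt x}{\dlt x} + \innerprod{A^*\dlt y}{\dlt x} = 0$, and by self-adjointness $\innerprod{A^*\dlt y}{\dlt x} = \innerprod{\dlt y}{A\dlt x} = \innerprod{\dlt y}{\dlt z}$ using \ref{prop:limits1:Adx}. But $\innerprod{\dlt z}{\dlt y} = 0$ by Corollary~\ref{cor:dr}\ref{cor:dr:moreau2}, so $\innerprod{Q\dlt x}{\dlt x} = 0$. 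Since $Q$ is monotone and self-adjoint, it is positive semidefinite, so $\innerprod{Q\dlt x}{\dlt x}=0$ forces $Q\dlt x = 0$ (via $Q = (Q^{1/2})^2$, or by noting that $x\mapsto\innerprod{Qx}{x}$ is a nonnegative quadratic form minimized at $\dlt x$, whose gradient $2Q\dlt x$ must vanish). This establishes \ref{prop:limits1:Qdx}, and substituting back into $Q\dlt x + A^*\dlt y = 0$ immediately yields \ref{prop:limits1:Atdy}.

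Finally, for the strong-convergence statements \ref{prop:limits1:dz} and \ref{prop:limits1:dy}, I would apply Proposition~\ref{prop:dpdr} and Corollary~\ref{cor:dr} to the shifted sequence. Since $z_n = \project{\setC}v_n$ and $y_n = (\Id-\project{\setC})v_n$, the increments satisfy $\dlt z_{n+1} = \project{\setC}v_{n+1} - \project{\setC}v_n$ and $\dlt y_{n+1} = \dlt v_{n+1} - \dlt z_{n+1}$. The cleanest route is to invoke Fact~\ref{fact:operatorConvergence}\ref{lem:operatorConvergence:ii} in the same spirit as Corollary~\ref{cor:dr}\ref{cor:dr:dz}--\ref{cor:dr:dy}: just as $\tfrac1n z_n\to\dlt z = \project{\recession{\setC}}(\dlt v)$ and $\tfrac1n y_n\to\dlt y = \project{\polar{(\recession{\setC})}}(\dlt v)$ were obtained from $\tfrac1n v_n\to\dlt v$ through Proposition~\ref{prop:dpdr}, the increment version $\dlt v_n\to\dlt v$ from Corollary~\ref{cor:dr}\ref{cor:dr:dlt_xv} feeds into the same projection-continuity argument to give $\dlt z_n\to\project{\recession{\setC}}(\dlt v) = \dlt z$ and $\dlt y_n\to\project{\polar{(\recession{\setC})}}(\dlt v) = \dlt y$; the expected subtlety is confirming that the averaged-operator structure makes $\project{\setC}v_{n+1}-\project{\setC}v_n$ behave like the projection of $\dlt v_{n+1}$, which follows by applying Proposition~\ref{prop:dpdr} with the role of $\tfrac1n s_n$ replaced by $\dlt v_n$ and using the nonexpansiveness of $\project{\setC}$ together with Moreau's decomposition.
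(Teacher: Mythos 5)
Your treatment of parts \ref{prop:limits1:Adx}--\ref{prop:limits1:Atdy} is correct and matches the paper's proof in all essentials: both divide \eqref{eqn:residUpdates} by $n$, annihilate the right-hand sides using boundedness of the increments from \Cor~\ref{cor:dr}\ref{cor:dr:dlt_xv}, and decouple $Q\dlt x + A^*\dlt y=0$ by pairing with $\dlt x$ and invoking $\innerprod{\dlt z}{\dlt y}=0$ together with positive semidefiniteness of the monotone self-adjoint $Q$ (the paper cites a corollary of \cite{Bauschke:2017:book} where you argue via the square root of $Q$; same content, and your square-root argument is valid in a Hilbert space).

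The gap is in part \ref{prop:limits1:dz}. You propose to obtain $\dlt z_n\to\dlt z$ by ``applying Proposition~\ref{prop:dpdr} with the role of $\tfrac1n s_n$ replaced by $\dlt v_n$,'' but Proposition~\ref{prop:dpdr} says nothing about consecutive differences of projections: its hypothesis concerns a sequence with $\tfrac1n s_n\to\dlt s$ and its conclusions concern the Ces\`aro-type limits $\tfrac1n\project{\setD}s_n$, not $\project{\setD}s_{n+1}-\project{\setD}s_n$. Nonexpansiveness of $\project{\setC}$ only yields the bound $\norm{\dlt z_{n+1}}\le\norm{\dlt v_{n+1}}$; it neither gives convergence of $\seq{\dlt z_n}$ nor identifies a limit, and there is no general principle by which $\dlt v_n\to\dlt v$ forces $\project{\setC}v_{n+1}-\project{\setC}v_n\to\project{\recession{\setC}}(\dlt v)$. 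The paper avoids this entirely with an algebraic observation you already have in hand: identity \eqref{eqn:residUpdate:primal} expresses $z_n$ affinely in the iterates, namely $z_n = Ax_n + \alpha\inv\left(A\dlt x_{n+1}-\dlt v_{n+1}\right)$, so differencing consecutive instances gives $\dlt z_{n+1} = A\dlt x_{n+1} + \alpha\inv\left(A(\dlt x_{n+2}-\dlt x_{n+1})-(\dlt v_{n+2}-\dlt v_{n+1})\right)$, and \Cor~\ref{cor:dr}\ref{cor:dr:dlt_xv} makes the second term vanish in the limit, leaving $\dlt z_n\to A\dlt x=\dlt z$ by part \ref{prop:limits1:Adx}. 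Once \ref{prop:limits1:dz} is repaired this way, your part \ref{prop:limits1:dy}, \ie $\dlt y_n=\dlt v_n-\dlt z_n\to\dlt v-\dlt z=\dlt y$, is exactly the paper's argument and is fine.
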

\begin{proof}
\ref{prop:limits1:Adx}:
Divide~\eqref{eqn:residUpdate:primal} by $n$, take the limit, and use \Cor~\ref{cor:dr}\ref{cor:dr:dz} to get
\begin{equation}\label{eqn:Adx}
  A \dlt x = \lim_{n\to\infty}\tfrac{1}{n}\project{\setC} v_n = \dlt z.
\end{equation}

\ref{prop:limits1:Qdx}:
Divide~\eqref{eqn:residUpdate:dual} by $n$, take the inner product of both sides with $\dlt x$ and take the limit to obtain
\[
	\innerprod{Q\dlt x}{\dlt x} = -\lim_{n\to\infty} \big\langle A \dlt x, \tfrac{1}{n}(\Id-\project{\setC}) v_n \big\rangle = -\innerprod{\dlt z}{\dlt y} = 0,
\]
where we used \eqref{eqn:Adx} and \Cor~\ref{cor:dr}\ref{cor:dr:dy} in the second equality, and \Cor~\ref{cor:dr}\ref{cor:dr:moreau2} in the third.
Due to \cite[\Cor~18.18]{Bauschke:2017:book}, the equality above implies
\begin{equation}\label{eqn:Qdx}
	Q \dlt x = 0.
\end{equation}

\ref{prop:limits1:Atdy}:
Divide \eqref{eqn:residUpdate:dual} by $n$, take the limit, and use \eqref{eqn:Qdx} to obtain
\[
  0 = \lim_{n\to\infty}\tfrac{1}{n} A^* (\Id-\project{\setC}) v_n = A^* \dlt y,
\]
where we used \Cor~\ref{cor:dr}\ref{cor:dr:dy} in the second equality.

\ref{prop:limits1:dz}:
Subtracting \eqref{eqn:residUpdate:primal} at iterations $n+1$ and $n$, and taking the limit yield
\[
  \lim_{n\to\infty}\dlt z_n = A\dlt x = \dlt z,
\]
where the second equality follows from~\eqref{eqn:Adx}.

\ref{prop:limits1:dy}:
From \eqref{eqn:zy_from_v} we have
\begin{align*}
	\lim_{n\to\infty} \dlt y_n = \lim_{n\to\infty} \left( \dlt v_n - \dlt z_n \right) = \dlt v - \dlt z = \dlt y,
\end{align*}
where the last equality follows from \Cor~\ref{cor:dr}\ref{cor:dr:moreau1}.
\end{proof}

\begin{proposition}\label{prop:limits2}
The following identities hold for $\dlt x$ and $\dlt y$, which are defined in \Cor~\ref{cor:dr}:
\begin{enumerate}[label=(\roman*)]
  \item \label{prop:limits2:qdx}
  $\innerprod{q}{\dlt x} = -\alpha\inv \norm{\dlt x}^2 - \alpha\inv \norm{A\dlt x}^2$.

  \item \label{prop:limits2:support}
  $\support{\setC}(\dlt y) = -\alpha\inv \norm{\dlt y}^2$.
\end{enumerate}
\end{proposition}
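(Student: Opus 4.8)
The plan is to establish \ref{prop:limits2:qdx} and \ref{prop:limits2:support} \emph{simultaneously}, by exhibiting three quantities that are each of one sign and whose sum is forced to vanish. It is convenient to name the primal and dual residuals on the right-hand sides of \eqref{eqn:residUpdates},
\[
  \rho_n \eqdef Ax_n - z_n, \qquad d_n \eqdef Qx_n + q + A^* y_n,
\]
where $A^*(\Id-\project{\setC})v_n = A^* y_n$ by \eqref{eqn:y_from_v}. By \Cor~\ref{cor:dr}\ref{cor:dr:dlt_xv} these converge: $\rho_n \to \alpha\inv\dlt y$ and $d_n \to d_\infty \eqdef -\alpha\inv(\dlt x + A^*\dlt v)$, after simplifying the limits with \Prop~\ref{prop:limits1} and \Cor~\ref{cor:dr}\ref{cor:dr:moreau1} (namely $A\dlt x - \dlt v = -\dlt y$ and $(Q+\Id)\dlt x = \dlt x$). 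Using \Prop~\ref{prop:limits1}\ref{prop:limits1:Adx} and \Cor~\ref{cor:dr}\ref{cor:dr:moreau2} one then finds
\[
  \innerprod{d_\infty}{\dlt x} = -\alpha\inv\big(\norm{\dlt x}^2 + \norm{A\dlt x}^2\big).
\]

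First I would record two one-sided inequalities together with one sign condition. Taking the inner product of \eqref{eqn:residUpdate:primal} with $\dlt y$ and using $A^*\dlt y = 0$ (\Prop~\ref{prop:limits1}\ref{prop:limits1:Atdy}) gives $\innerprod{z_n}{\dlt y} \to -\alpha\inv\norm{\dlt y}^2$; since $z_n\in\setC$ we have $\innerprod{z_n}{\dlt y}\le\support{\setC}(\dlt y)$, whence $\support{\setC}(\dlt y) + \alpha\inv\norm{\dlt y}^2 \ge 0$. Taking the inner product of \eqref{eqn:residUpdate:dual} with $\dlt x$ and using $Q\dlt x = 0$ gives $\innerprod{q}{\dlt x} + \innerprod{y_n}{\dlt z} = \innerprod{d_n}{\dlt x}$; because $\dlt z\in\recession{\setC}$ while $y_n\in\polar{(\recession{\setC})}$ (\Cor~\ref{cor:dr}\ref{cor:dr:yn}) we have $\innerprod{y_n}{\dlt z}\le 0$, so letting $n\to\infty$ yields $\innerprod{q}{\dlt x} + \alpha\inv(\norm{\dlt x}^2 + \norm{A\dlt x}^2) \ge 0$. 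Finally $\tfrac1n\innerprod{Qx_n}{x_n}\ge 0$ for every $n$, since $Q$ is monotone.

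The heart of the argument is a single exact identity that ties these together. Substituting $z_n = Ax_n - \rho_n$ and $A^* y_n = d_n - Qx_n - q$ into $\innerprod{z_n}{y_n}$ and dividing by $n$ gives
\[
  \tfrac1n\innerprod{z_n}{y_n} = \innerprod{\tfrac1n x_n}{d_n} - \tfrac1n\innerprod{Qx_n}{x_n} - \innerprod{q}{\tfrac1n x_n} - \innerprod{\rho_n}{\tfrac1n y_n}.
\]
The left side tends to $\support{\setC}(\dlt y)$ by \Cor~\ref{cor:dr}\ref{cor:dr:support}, and three of the four terms on the right converge (to $\innerprod{d_\infty}{\dlt x}$, $\innerprod{q}{\dlt x}$ and $\alpha\inv\norm{\dlt y}^2$, using \Cor~\ref{cor:dr}\ref{cor:dr:xv}\&\ref{cor:dr:dy} and the limits of $\rho_n,d_n$). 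Hence $\tfrac1n\innerprod{Qx_n}{x_n}$ must converge too, to some $c\ge 0$, and rearranging the resulting equality produces
\[
  \big[\support{\setC}(\dlt y) + \alpha\inv\norm{\dlt y}^2\big] + \big[\innerprod{q}{\dlt x} + \alpha\inv(\norm{\dlt x}^2 + \norm{A\dlt x}^2)\big] + c = 0.
\]
Each summand is nonnegative by the previous paragraph, so all three vanish: the first bracket is \ref{prop:limits2:support} and the second is \ref{prop:limits2:qdx}.

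The main obstacle is that none of these quantities can be pinned down in isolation: since $x_n$ and $y_n$ grow linearly in $n$, the cross terms $\innerprod{y_n}{\dlt z}$ and $\tfrac1n\innerprod{Qx_n}{x_n}$ are indeterminate limits of the form $n\cdot 0$, which the convex geometry alone does not resolve. The device that circumvents this is to never evaluate them directly, but to arrange the exact identity so that a sum of three sign-definite quantities equals zero, forcing each to be zero; in particular, convergence of $\tfrac1n\innerprod{Qx_n}{x_n}$ is obtained only a posteriori, from convergence of the remaining terms.
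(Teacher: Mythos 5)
Your proposal is correct and follows essentially the same route as the paper: the two one-sided inequalities you derive by pairing \eqref{eqn:residUpdate:dual} with $\dlt x$ and \eqref{eqn:residUpdate:primal} with $\dlt y$ are exactly the paper's \eqref{eqn:ineq1} and \eqref{eqn:ineq2}, and your exact identity obtained by expanding $\tfrac{1}{n}\innerprod{z_n}{y_n}$ is the same computation as the paper's pairing of \eqref{eqn:residUpdate:dual} with $x_n$ combined with \Cor~\ref{cor:dr}\ref{cor:dr:support}, after which monotonicity of $Q$ forces the sum of sign-definite terms to vanish. The only (welcome) refinement is that you make explicit the a posteriori convergence of $\tfrac{1}{n}\innerprod{Qx_n}{x_n}$, which the paper leaves implicit.
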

\begin{proof}
Take the inner product of both sides of~\eqref{eqn:residUpdate:dual} with~$\dlt x$ and use \eqref{eqn:Qdx} to obtain
\[
	\innerprod{q}{\dlt x} + \innerprod{A\dlt x}{y_n} = - \alpha\inv \innerprod{\dlt x}{\dlt x_{n+1}} - \alpha\inv \innerprod{A\dlt x}{\dlt v_{n+1}}.
\]
Taking the limit and using \Prop~\ref{prop:limits1}\ref{prop:limits1:Adx} and \Cor~\ref{cor:dr}\ref{cor:dr:moreau1}\&\ref{cor:dr:moreau2} give
\begin{equation}\label{eqn:ineq1}
  \innerprod{q}{\dlt x} + \alpha\inv \norm{\dlt x}^2 + \alpha\inv \norm{\dlt z}^2 = -\lim_{n\to\infty} \innerprod{\dlt z}{y_n} \ge 0,
\end{equation}
where the inequality follows from \Cor~\ref{cor:dr}\ref{cor:dr:yn}\&\ref{cor:dr:dz} as the inner product of terms in $\recession{\setC}$ and $\polar{(\recession{\setC})}$ is nonpositive.
Now take the inner product of both sides of~\eqref{eqn:residUpdate:primal} with $\dlt y$ to obtain
\[
	\innerprod{A^* \dlt y}{x_n + \alpha\inv \dlt x_{n+1}} - \innerprod{\dlt y}{\project{\setC} v_n} = \alpha\inv \innerprod{\dlt y}{\dlt v_{n+1}}.
\]
Due to \Prop~\ref{prop:limits1}\ref{prop:limits1:Atdy}, the first inner product on the left-hand side is zero.
Taking the limit and using \Cor~\ref{cor:dr}\ref{cor:dr:moreau1}\&\ref{cor:dr:moreau2}, we obtain
\[
  -\alpha\inv \norm{\dlt y}^2 = \lim_{n\to\infty}\innerprod{\dlt y}{\project{\setC} v_n} \le \sup_{z\in\setC} \innerprod{\dlt y}{z} = \support{\setC}(\dlt y),
\]
or equivalently,
\begin{equation}\label{eqn:ineq2}
  \support{\setC}(\dlt y) + \alpha\inv \norm{\dlt y}^2 \ge 0.
\end{equation}
Summing \eqref{eqn:ineq1} and \eqref{eqn:ineq2} and using \Cor~\ref{cor:dr}\ref{cor:dr:moreau3}, we obtain
\begin{equation}\label{eqn:ineq3}
	\innerprod{q}{\dlt x} + \support{\setC}(\dlt y) + \alpha\inv \norm{\dlt x}^2 + \alpha\inv \norm{\dlt v}^2 \ge 0.
\end{equation}
Now take the inner product of both sides of~\eqref{eqn:residUpdate:dual} with $x_n$ to obtain
\[
	\innerprod{Qx_n}{x_n} + \innerprod{q}{x_n} + \innerprod{Ax_n}{y_n} = -\alpha\inv \innerprod{(Q+\Id)\dlt x_{n+1}}{x_n} - \alpha\inv \innerprod{Ax_n}{\dlt v_{n+1}}.
\]
Dividing by $n$, taking the limit, and using \Prop~\ref{prop:limits1}\ref{prop:limits1:Adx}\&\ref{prop:limits1:Qdx} and \Cor~\ref{cor:dr}\ref{cor:dr:moreau1}\&\ref{cor:dr:moreau2} yield
\[
	\lim_{n\to\infty} \tfrac{1}{n} \innerprod{Qx_n}{x_n} + \innerprod{q}{\dlt x} + \lim_{n\to\infty} \tfrac{1}{n}\innerprod{Ax_n}{y_n} = - \alpha\inv \norm{\dlt x}^2 - \alpha\inv \norm{\dlt z}^2.
\]
We can write the last term on the left-hand side as
\begin{align*}
  \lim_{n\to\infty} \tfrac{1}{n}\innerprod{Ax_n}{y_n} &= \lim_{n\to\infty} \tfrac{1}{n}\innerprod{z_n + \alpha\inv \left( \dlt v_{n+1} - A\dlt x_{n+1} \right)}{y_n} \\
  &= \lim_{n\to\infty} \tfrac{1}{n}\innerprod{z_n}{y_n} + \alpha\inv \norm{\dlt y}^2 \\
  &= \support{\setC}(\dlt y) + \alpha\inv \norm{\dlt y}^2,
\end{align*}
where the first equality follows from \eqref{eqn:residUpdate:primal}, the second from \Prop~\ref{prop:limits1}\ref{prop:limits1:Adx} and \Cor~\ref{cor:dr}\ref{cor:dr:dy}\&\ref{cor:dr:moreau1}, and the third from \Cor~\ref{cor:dr}\ref{cor:dr:support}.
Plugging the equality above in the preceding, we obtain
\begin{equation}\label{eqn:ineq4}
	\innerprod{q}{\dlt x} + \support{\setC}(\dlt y) + \alpha\inv \norm{\dlt x}^2 + \alpha\inv \norm{\dlt v}^2 = -\lim_{n\to\infty} \tfrac{1}{n} \innerprod{Qx_n}{x_n} \le 0,
\end{equation}
where the inequality follows from the monotonicity of $Q$.
Comparing inequalities \eqref{eqn:ineq3} and \eqref{eqn:ineq4}, it follows that they must be satisfied with equality.
Consequently, the left-hand sides of \eqref{eqn:ineq1} and \eqref{eqn:ineq2} must be zero.
This concludes the proof.
\end{proof}

Given the infeasibility conditions in \Prop~\ref{prop:infeas}, it follows from \Prop~\ref{prop:limits1} and \Prop~\ref{prop:limits2} that, if the limit $\dlt y$ is nonzero, then problem~\eqref{eqn:primal} is strongly infeasible, and similarly, if $\dlt x$ is nonzero, then its dual is strongly infeasible.
Thanks to the fact that $(\dlt y_n, \dlt x_n) \to (\dlt y, \dlt x)$, we can now extend the termination criteria proposed in \cite[\S 5.2]{Banjac:2019} for the more general case where $\setC$ is a general nonempty closed convex set.
The criteria in \cite[\S 5.2]{Banjac:2019} evaluate conditions given in \Prop~\ref{prop:infeas} at $\dlt y_n$ and $\dlt x_n$, and have already formed the basis for stable numerical implementations \cite{Stellato:2020,Garstka:2019}.
Our results pave the way for similar developments in the more general setting considered here.

\section{Proximal-Point Algorithm}\label{sec:ppa}
The proximal-point algorithm is a method for finding a vector $w\in\mcf{H}$ that solves the following inclusion problem:
\begin{equation}\label{eqn:monotone_inclusion}
  0 \in B(w),
\end{equation}
where $B\colon\mcf{H}\to2^\mcf{H}$ is a maximally monotone operator.
An iteration of the algorithm in application to problem~\eqref{eqn:monotone_inclusion} can be written as
\[
  w_{n+1} = (\Id + \stepsize B)^{-1} w_n,
\]
where $\stepsize>0$ is the \emph{regularization parameter}.

Due to \cite[\Cor~16.30]{Bauschke:2017:book}, we can rewrite \eqref{eqn:opt_incl} as
\[
  0 \in \mcf{M}(x,y) \eqdef \begin{pmatrix} Qx + q + A^* y \\ -Ax + \partial \indicator{\setC}^* (y) \end{pmatrix},
\]
where $\mcf{M}\colon(\mcf{H}_1\times\mcf{H}_2)\to2^{(\mcf{H}_1\times\mcf{H}_2)}$ is a maximally monotone operator \cite{Rockafellar:1976}.
An iteration of the proximal-point algorithm in application to the inclusion above is then
\begin{equation}\label{alg:ppa}
  (x_{n+1},y_{n+1}) = \left( \Id + \stepsize \mcf{M} \right)\inv (x_n,y_n),
\end{equation}
which was also analyzed in \cite{Hermans:2019}.
We will exploit the following result \cite[\Prop~23.8]{Bauschke:2017:book} to analyze the algorithm:
\begin{fact}\label{fact:ppa}
	Operator $T_{\rm PP} \eqdef (\Id + \stepsize \mcf{M})\inv$ is the resolvent of a maximally monotone operator and is thus $(1/2)$-averaged.
\end{fact}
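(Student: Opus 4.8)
The plan is to identify $T_{\rm PP}$ as the resolvent of a suitable maximally monotone operator and then invoke the standard equivalence between resolvents of maximally monotone operators, firmly nonexpansive mappings, and $(1/2)$-averaged operators. First I would note that, since $\mcf{M}$ is maximally monotone (as recorded above via \cite{Rockafellar:1976}) and $\stepsize>0$, the scaled operator $\stepsize\mcf{M}$ is again maximally monotone: positive scalar multiplication merely rescales the pairing $\innerprod{w-w'}{u-u'}$ that defines monotonicity, and the corresponding graph map $(w,u)\mapsto(w,\stepsize u)$ is a homeomorphism of $(\mcf{H}_1\times\mcf{H}_2)^2$ preserving maximality. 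Consequently $T_{\rm PP}=(\Id+\stepsize\mcf{M})\inv$ is exactly the resolvent of the maximally monotone operator $\stepsize\mcf{M}$, which settles the first assertion.

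Next I would appeal to Minty's theorem to guarantee that the resolvent of a maximally monotone operator on a real Hilbert space is single-valued with full domain $\mcf{H}_1\times\mcf{H}_2$, so that $T_{\rm PP}$ is a genuine everywhere-defined operator rather than a set-valued map. The same circle of results shows that this resolvent is firmly nonexpansive, \ie $\norm{T_{\rm PP}w-T_{\rm PP}w'}^2\le\innerprod{w-w'}{T_{\rm PP}w-T_{\rm PP}w'}$ holds for all $w,w'\in\mcf{H}_1\times\mcf{H}_2$.

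Finally, I would use the elementary characterization that an operator is firmly nonexpansive if and only if it is $(1/2)$-averaged: setting $R\eqdef 2T_{\rm PP}-\Id$, firm nonexpansiveness of $T_{\rm PP}$ is equivalent to nonexpansiveness of $R$, whence $T_{\rm PP}=\half\Id+\half R$ exhibits $T_{\rm PP}$ as $(1/2)$-averaged in the sense defined above. In the actual write-up these three steps collapse into a single citation to \cite[\Prop~23.8]{Bauschke:2017:book}, which packages precisely the chain ``resolvent of a maximally monotone operator $\Rightarrow$ firmly nonexpansive $\Rightarrow$ $(1/2)$-averaged.'' I do not expect a genuine obstacle, since every ingredient is classical; the only point requiring a moment's care is verifying that $\stepsize\mcf{M}$ inherits maximal monotonicity from $\mcf{M}$, which is exactly where the hypothesis $\stepsize>0$ enters.
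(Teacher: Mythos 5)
Your proposal is correct and follows essentially the same route as the paper, which simply invokes \cite[\Prop~23.8]{Bauschke:2017:book} after having recorded the maximal monotonicity of $\mcf{M}$ via \cite{Rockafellar:1976}. You merely unpack that citation into its classical ingredients (maximal monotonicity of $\stepsize\mcf{M}$ for $\stepsize>0$, Minty's theorem, and the equivalence of firm nonexpansiveness with $(1/2)$-averagedness), all of which is accurate.
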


Iteration \eqref{alg:ppa} reads
\begin{subequations}\label{eqn:ppa:xy_next}
\begin{align}
  0 &=   x_{n+1} - x_n + \stepsize \left( Q x_{n+1} + q  + A^* y_{n+1} \right) \label{eqn:ppa:x_next} \\
  0 &\in y_{n+1} - y_n + \stepsize \left( -A x_{n+1} + \partial \indicator{\setC}^* (y_{n+1}) \right). \label{eqn:ppa:y_next}
\end{align}
\end{subequations}
Inclusion \eqref{eqn:ppa:y_next} can be written as
\[
  \stepsize A x_{n+1} + y_n \in \left( \Id + \stepsize \partial \indicator{\setC}^* \right) y_{n+1},
\]
which is equivalent to \cite[\Prop~16.44]{Bauschke:2017:book}
\begin{equation}\label{eqn:ppa:y_next_proj}
  y_{n+1} = \prox_{\stepsize \indicator{\setC}^*} \left( \stepsize A x_{n+1} + y_n \right)
  = \stepsize A x_{n+1} + y_n - \stepsize \project{\setC} ( A x_{n+1} + \stepsize\inv y_n),
\end{equation}
where the second equality follows from \cite[\Thm~14.3]{Bauschke:2017:book}.
Let us define the following auxiliary iterates of iteration~\eqref{alg:ppa}:
\begin{subequations}\label{eqn:ppa:vz}
\begin{align}
  v_{n+1} &\eqdef Ax_{n+1} + \stepsize\inv y_n \\
  z_{n+1} &\eqdef \project{\setC} v_{n+1},
\end{align}
\end{subequations}
and observe from \eqref{eqn:ppa:y_next_proj} that
\[
  y_{n+1} = \stepsize (\Id-\project{\setC}) v_{n+1}.
\]
Using \eqref{eqn:ppa:x_next} and \eqref{eqn:ppa:y_next_proj}, we now obtain the following relations between the iterates:
\begin{subequations}\label{eqn:ppa:residUpdates}
\begin{align}
  A x_{n+1} - \project{\setC} v_{n+1} &= \stepsize\inv \dlt y_{n+1} \label{eqn:ppa:residUpdate:primal} \\
  Q x_{n+1} + q  + \stepsize A^* (\Id-\project{\setC}) v_{n+1} &= -\stepsize\inv \dlt x_{n+1}. \label{eqn:ppa:residUpdate:dual}
\end{align}
\end{subequations}
Similarly as for the Douglas-Rachford algorithm, the pair $(z_{n+1},y_{n+1})$ satisfies optimality condition \eqref{eqn:inclusion_cond} for all $n\in\Nat$.
Observe that the optimality residuals, given by the norms of the left-hand terms in \eqref{eqn:ppa:residUpdates}, can be computed by evaluating the norms of $\dlt y_{n+1}$ and $\dlt x_{n+1}$.

The following corollary follows directly from Fact~\ref{fact:operatorConvergence}, \Prop~\ref{prop:dpdr}, and Fact~\ref{fact:ppa}:
\begin{corollary}\label{cor:pp}
Let the sequences $\seq{x_n}$, $\seq{y_n}$, $\seq{v_n}$, and $\seq{z_n}$ be given by \eqref{alg:ppa} and \eqref{eqn:ppa:vz}, and $(\dlt x, \dlt y) \eqdef \project{\closure{\range}(T_{\rm PP}-\Id)}(0)$.
Then
\begin{enumerate}[label=(\roman*)]
  \item \label{cor:pp:xyv} $\tfrac{1}{n} (x_n,y_n,v_n) \to (\dlt x,\dlt y,A\dlt x + \stepsize\inv\dlt y)$.
  \item \label{cor:pp:dlt_xyv} $(\dlt x_n,\dlt y_n,\dlt v_n) \to (\dlt x,\dlt y,A\dlt x + \stepsize\inv\dlt y)$.
  \item \label{cor:pp:yn} $y_{n+1}\in\polar{(\recession{\setC})}$.
  \item \label{cor:pp:dz} $\tfrac{1}{n}z_n\to\dlt z\eqdef\project{\recession{\setC}}(\dlt v)$.
  \item \label{cor:pp:dy} $\dlt y=\stepsize\project{\polar{(\recession{\setC})}}(\dlt v)$.
  \item \label{cor:pp:support} $\lim_{n\to\infty}\tfrac{1}{n}\innerprod{z_n}{y_n} = \support{\setC}(\dlt y)$.
\end{enumerate}
\end{corollary}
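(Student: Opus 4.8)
The plan is to reduce every claim to the three quoted results by viewing the proximal-point recursion as a single averaged-operator iteration and then feeding its $v$-sequence into \Prop~\ref{prop:dpdr}. Write $s_n \eqdef (x_n,y_n)$, so that iteration~\eqref{alg:ppa} reads $s_{n+1} = T_{\rm PP}s_n$. By Fact~\ref{fact:ppa}, $T_{\rm PP}$ is $(1/2)$-averaged and hence averaged, so Fact~\ref{fact:operatorConvergence} applies with $\dlt s = \project{\closure{\range}(T_{\rm PP}-\Id)}(0) = (\dlt x,\dlt y)$. This immediately gives $\tfrac{1}{n}(x_n,y_n)\to(\dlt x,\dlt y)$ and $(\dlt x_n,\dlt y_n)\to(\dlt x,\dlt y)$, which are the first two components of \ref{cor:pp:xyv} and \ref{cor:pp:dlt_xyv}.

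For the $v$-component I would use the definition $v_{n+1} = Ax_{n+1} + \stepsize\inv y_n$, i.e.\ $v_n = Ax_n + \stepsize\inv y_{n-1}$. Since $A$ is bounded linear and hence continuous, dividing by $n$ and letting $n\to\infty$ gives $\tfrac{1}{n}v_n = A\big(\tfrac{1}{n}x_n\big) + \stepsize\inv\big(\tfrac{1}{n}y_{n-1}\big) \to A\dlt x + \stepsize\inv\dlt y$, where I use the elementary fact that rescaling the index by $\tfrac{n}{n-1}\to 1$ does not alter the limit. The same computation on consecutive differences yields $\dlt v_n = A\dlt x_n + \stepsize\inv\dlt y_{n-1} \to A\dlt x + \stepsize\inv\dlt y$. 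This completes \ref{cor:pp:xyv} and \ref{cor:pp:dlt_xyv} and identifies the limit $\dlt v \eqdef A\dlt x + \stepsize\inv\dlt y$ used in the remaining parts.

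The last four claims then follow by applying \Prop~\ref{prop:dpdr} with $\setD = \setC$ and the sequence $\seq{v_n}$, whose rescaled limit $\tfrac{1}{n}v_n\to\dlt v$ was just established. Here the projection/residual pair of the proposition specializes to $p_n = \project{\setC}v_n = z_n$ and $r_n = (\Id-\project{\setC})v_n = \stepsize\inv y_n$, the latter identity being exactly the relation $y_n = \stepsize(\Id-\project{\setC})v_n$ read off from~\eqref{eqn:ppa:y_next_proj}. Part~\ref{prop:dpdr:i} then gives $\stepsize\inv y_n\in\polar{(\recession{\setC})}$, and since a polar cone is a cone and $\stepsize>0$ this yields \ref{cor:pp:yn}. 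Part~\ref{prop:dpdr:ii} gives $\tfrac{1}{n}z_n\to\project{\recession{\setC}}(\dlt v)$, which is \ref{cor:pp:dz}. Part~\ref{prop:dpdr:iii} gives $\stepsize\inv\tfrac{1}{n}y_n\to\project{\polar{(\recession{\setC})}}(\dlt v)$, which combined with $\tfrac{1}{n}y_n\to\dlt y$ rearranges to \ref{cor:pp:dy}. Finally, Part~\ref{prop:dpdr:iv} gives $\lim_{n\to\infty}\stepsize\inv\tfrac{1}{n}\innerprod{z_n}{y_n} = \support{\setC}(\stepsize\inv\dlt y)$, and positive homogeneity of the support function turns this into \ref{cor:pp:support}.

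I do not expect a genuine obstacle, since the statement is designed to follow directly from the cited results; the care required is entirely bookkeeping. The two points to get right are the index shift in the definition of $v_{n+1}$, so that the $v$-limit is correctly $A\dlt x + \stepsize\inv\dlt y$, and the consistent tracking of the scalar $\stepsize$ between $r_n$ and $y_n$, which propagates through the positive homogeneity of $\polar{(\recession{\setC})}$ in \ref{cor:pp:yn} and \ref{cor:pp:dy} and of $\support{\setC}$ in \ref{cor:pp:support}. Note that, in contrast to \Cor~\ref{cor:dr}, no Moreau-decomposition identities are asserted here, so that ingredient of the Douglas-Rachford argument is not needed.
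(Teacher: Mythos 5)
Your proposal is correct and follows exactly the route the paper intends: the paper gives no detailed proof, stating only that the corollary ``follows directly from Fact~\ref{fact:operatorConvergence}, \Prop~\ref{prop:dpdr}, and Fact~\ref{fact:ppa}'', and your argument fills in precisely that derivation, including the correct handling of the index shift in $v_{n+1}$ and the factor $\stepsize$ linking $y_n$ to the residual $(\Id-\project{\setC})v_n$.
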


The proofs of the following two propositions follow similar arguments as those in Section~\ref{sec:dra}, and are thus omitted.

\begin{proposition}\label{prop:ppa:limits1}
The following relations hold between $\dlt x$, $\dlt z$, and~$\dlt y$, which are defined in \Cor~\ref{cor:pp}:
\begin{enumerate}[label=(\roman*)]
  \item \label{prop:ppa:limits1:Adx}  $A \dlt x = \dlt z$.
  \item \label{prop:ppa:limits1:Qdx}  $Q\dlt x = 0$.
  \item \label{prop:ppa:limits1:Atdy} $A^* \dlt y = 0$.
\end{enumerate}
\end{proposition}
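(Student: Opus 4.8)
The plan is to follow the proof of \Prop~\ref{prop:limits1} almost verbatim, reading the three identities off the residual relations \eqref{eqn:ppa:residUpdates} after dividing by $n$ and passing to the limit. The tools are the continuity of the bounded linear operators $A$, $A^*$, and $Q$; the two convergence statements $\tfrac{1}{n}(x_n,y_n,v_n)\to(\dlt x,\dlt y,\dlt v)$ and $(\dlt x_n,\dlt y_n,\dlt v_n)\to(\dlt x,\dlt y,\dlt v)$ from \Cor~\ref{cor:pp}\ref{cor:pp:xyv}\&\ref{cor:pp:dlt_xyv}, where $\dlt v\eqdef A\dlt x+\stepsize\inv\dlt y$; and the observation that, since the increments converge, they are bounded and hence vanish once divided by $n$. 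A harmless reindexing identifies $\lim_n\tfrac{1}{n}x_{n+1}=\dlt x$ and similarly for the other sequences.

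For \ref{prop:ppa:limits1:Adx}, I would divide \eqref{eqn:ppa:residUpdate:primal} by $n$ and let $n\to\infty$. Using continuity of $A$, \Cor~\ref{cor:pp}\ref{cor:pp:xyv}, and $\tfrac{1}{n}z_{n+1}\to\dlt z$ from \Cor~\ref{cor:pp}\ref{cor:pp:dz}, the left-hand side tends to $A\dlt x-\dlt z$, while the right-hand side $\stepsize\inv\tfrac{1}{n}\dlt y_{n+1}$ vanishes; this gives $A\dlt x=\dlt z$.

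For \ref{prop:ppa:limits1:Qdx}, I would first rewrite \eqref{eqn:ppa:residUpdate:dual} using $\stepsize(\Id-\project{\setC})v_{n+1}=y_{n+1}$ in the equivalent form $Qx_{n+1}+q+A^*y_{n+1}=-\stepsize\inv\dlt x_{n+1}$, divide by $n$, and take the inner product with $\dlt x$. In the limit the first term gives $\innerprod{Q\dlt x}{\dlt x}$, the $q$-term vanishes, and moving $A^*$ across the inner product together with \ref{prop:ppa:limits1:Adx} and \Cor~\ref{cor:pp}\ref{cor:pp:xyv} turns the third term into $\innerprod{\dlt y}{A\dlt x}=\innerprod{\dlt y}{\dlt z}$; the right-hand side vanishes. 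Since $\dlt z=\project{\recession{\setC}}(\dlt v)$ and $\stepsize\inv\dlt y=\project{\polar{(\recession{\setC})}}(\dlt v)$ are complementary Moreau components of $\dlt v$ (\Cor~\ref{cor:pp}\ref{cor:pp:dz}\&\ref{cor:pp:dy}), their inner product is zero, so $\innerprod{Q\dlt x}{\dlt x}=0$, and \cite[\Cor~18.18]{Bauschke:2017:book} upgrades this to $Q\dlt x=0$. For \ref{prop:ppa:limits1:Atdy} I would take the same rewritten dual relation, divide by $n$, and pass to the limit directly: $\tfrac{1}{n}Qx_{n+1}\to Q\dlt x=0$ by \ref{prop:ppa:limits1:Qdx}, the $q$-term vanishes, $\tfrac{1}{n}A^*y_{n+1}\to A^*\dlt y$ by continuity of $A^*$, and the right-hand side vanishes, leaving $A^*\dlt y=0$.

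The only genuine subtlety is the orthogonality $\innerprod{\dlt z}{\dlt y}=0$ invoked in \ref{prop:ppa:limits1:Qdx}. In the Douglas-Rachford case this was recorded as \Cor~\ref{cor:dr}\ref{cor:dr:moreau2}, but \Cor~\ref{cor:pp} does not list the analogous Moreau relations, so I would derive it on the spot from Moreau's decomposition of $\dlt v$ into its $\recession{\setC}$ and $\polar{(\recession{\setC})}$ components, which \Cor~\ref{cor:pp}\ref{cor:pp:dz}\&\ref{cor:pp:dy} identify with $\dlt z$ and $\stepsize\inv\dlt y$. Everything else is a routine transcription of the Section~\ref{sec:dra} computations with the Douglas-Rachford factor $\alpha\inv$ replaced by the proximal-point normalization $\stepsize\inv$.
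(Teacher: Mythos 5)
Your proposal is correct and matches the paper's intent exactly: the paper omits this proof, stating only that it follows the same arguments as Section~\ref{sec:dra}, and your transcription of those arguments (with $\alpha\inv$ replaced by $\stepsize\inv$ and the rewriting $\stepsize(\Id-\project{\setC})v_{n+1}=y_{n+1}$) is the intended one. You also correctly spotted the one genuine gap in a verbatim transfer --- that \Cor~\ref{cor:pp} does not list the Moreau orthogonality $\innerprod{\dlt z}{\dlt y}=0$ --- and your on-the-spot derivation from Moreau's decomposition of $\dlt v$ via \Cor~\ref{cor:pp}\ref{cor:pp:dz}\&\ref{cor:pp:dy} is exactly what is needed.
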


\begin{proposition}\label{prop:ppa:limits2}
  The following identities hold for $\dlt x$ and $\dlt y$, which are defined in \Cor~\ref{cor:pp}:
\begin{enumerate}[label=(\roman*)]
  \item \label{prop:ppa:limits2:qdx}
  $\innerprod{q}{\dlt x} = -\stepsize\inv \norm{\dlt x}^2$.

  \item \label{prop:ppa:limits2:support}
  $\support{\setC}(\dlt y) = -\stepsize\inv \norm{\dlt y}^2$.
\end{enumerate}
\end{proposition}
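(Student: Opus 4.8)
The plan is to mirror the proof of \Prop~\ref{prop:limits2}, taking advantage of the fact that the proximal-point residual relations \eqref{eqn:ppa:residUpdates} are structurally cleaner than their Douglas--Rachford counterparts: their right-hand sides carry only $\stepsize\inv\dlt y_{n+1}$ and $-\stepsize\inv\dlt x_{n+1}$, with none of the mixed $(Q+\Id)\dlt x_{n+1}+A^*\dlt v_{n+1}$ terms of \eqref{eqn:residUpdate:dual}. The strategy is to produce two lower bounds, one for $\innerprod{q}{\dlt x}+\stepsize\inv\norm{\dlt x}^2$ and one for $\support{\setC}(\dlt y)+\stepsize\inv\norm{\dlt y}^2$, then a single matching upper bound for their sum, and finally to squeeze everything to equality. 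Since the sum of the two (individually nonnegative) expressions is forced to vanish, each must vanish, which is precisely \ref{prop:ppa:limits2:qdx} and \ref{prop:ppa:limits2:support}.

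First I would take the inner product of \eqref{eqn:ppa:residUpdate:dual} with $\dlt x$. Self-adjointness of $Q$ together with $Q\dlt x=0$ (\Prop~\ref{prop:ppa:limits1}\ref{prop:ppa:limits1:Qdx}) annihilates the term $\innerprod{Qx_{n+1}}{\dlt x}$, while the identity $y_{n+1}=\stepsize(\Id-\project{\setC})v_{n+1}$ and $A\dlt x=\dlt z$ (\Prop~\ref{prop:ppa:limits1}\ref{prop:ppa:limits1:Adx}) turn the constraint term into $\innerprod{\dlt z}{y_{n+1}}$. Passing to the limit using \Cor~\ref{cor:pp}\ref{cor:pp:dlt_xyv} gives
\[
  \innerprod{q}{\dlt x}+\stepsize\inv\norm{\dlt x}^2=-\lim_{n\to\infty}\innerprod{\dlt z}{y_{n+1}}\ge0,
\]
where nonnegativity follows because $\dlt z\in\recession{\setC}$ and $y_{n+1}\in\polar{(\recession{\setC})}$ (\Cor~\ref{cor:pp}\ref{cor:pp:yn}\&\ref{cor:pp:dz}). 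Symmetrically, taking the inner product of \eqref{eqn:ppa:residUpdate:primal} with $\dlt y$ and using $A^*\dlt y=0$ (\Prop~\ref{prop:ppa:limits1}\ref{prop:ppa:limits1:Atdy}) removes the $Ax_{n+1}$ term and leaves $-\innerprod{z_{n+1}}{\dlt y}=\stepsize\inv\innerprod{\dlt y_{n+1}}{\dlt y}$; since $z_{n+1}\in\setC$, the limit yields $-\stepsize\inv\norm{\dlt y}^2\le\support{\setC}(\dlt y)$, that is, $\support{\setC}(\dlt y)+\stepsize\inv\norm{\dlt y}^2\ge0$.

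For the reverse direction I would take the inner product of \eqref{eqn:ppa:residUpdate:dual} with $x_{n+1}$, divide by $n$, and take the limit. The key manipulation is to rewrite $\innerprod{Ax_{n+1}}{y_{n+1}}$ through the primal residual \eqref{eqn:ppa:residUpdate:primal}, namely $Ax_{n+1}=z_{n+1}+\stepsize\inv\dlt y_{n+1}$, so that \Cor~\ref{cor:pp}\ref{cor:pp:support} gives $\tfrac1n\innerprod{z_{n+1}}{y_{n+1}}\to\support{\setC}(\dlt y)$ while \Cor~\ref{cor:pp}\ref{cor:pp:xyv}\&\ref{cor:pp:dlt_xyv} give $\tfrac1n\innerprod{\dlt y_{n+1}}{y_{n+1}}\to\norm{\dlt y}^2$. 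The monotone self-adjoint operator $Q$ contributes the nonnegative term $\tfrac1n\innerprod{Qx_{n+1}}{x_{n+1}}$, leaving
\[
  \innerprod{q}{\dlt x}+\support{\setC}(\dlt y)+\stepsize\inv\norm{\dlt x}^2+\stepsize\inv\norm{\dlt y}^2=-\lim_{n\to\infty}\tfrac1n\innerprod{Qx_{n+1}}{x_{n+1}}\le0.
\]
Summing the two lower bounds of the previous paragraph reproduces exactly this expression with the opposite inequality, so both bounds hold with equality; as the left-hand side is a sum of two nonnegative quantities equal to zero, each is zero, establishing the two claimed identities.

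I expect the main obstacle to be bookkeeping rather than anything conceptual: one must check that the various scalar limits genuinely exist (each is forced by convergence of the remaining terms in its defining identity, exactly as in \eqref{eqn:ineq1}--\eqref{eqn:ineq4}) and that the index shift between $x_{n+1}$ and $x_n$ leaves the Ces\`aro limits of \Cor~\ref{cor:pp} intact. The one genuinely structural point to get right is the two-sided squeeze: the lower bounds rest on the polarity pairing between $\recession{\setC}$ and $\polar{(\recession{\setC})}$ and on $z_{n+1}\in\setC$, whereas the matching upper bound rests solely on monotonicity of $Q$, and it is the coincidence of these two independently derived bounds that pins down the separate identities.
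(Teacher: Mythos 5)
Your proposal is correct and is precisely the argument the paper intends: the paper omits the proof of this proposition, stating that it follows the same reasoning as \Prop~\ref{prop:limits2}, and your adaptation (two lower bounds from pairing the residual identities with $\dlt x$ and $\dlt y$, a matching upper bound from pairing the dual residual with $x_{n+1}$ and invoking monotonicity of $Q$, then squeezing the two nonnegative summands to zero) is exactly that reasoning transplanted to \eqref{eqn:ppa:residUpdates}. You also correctly note the one structural difference, namely that the absence of the $(Q+\Id)\dlt x_{n+1}+A^*\dlt v_{n+1}$ term explains why no $\norm{A\dlt x}^2$ appears in \ref{prop:ppa:limits2:qdx}.
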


The authors in \cite{Hermans:2019} use similar termination criteria to those given in \cite[\S 5.2]{Banjac:2019} to detect infeasibility of convex quadratic programs using the algorithm given by iteration \eqref{alg:ppa}, though they do not prove that $\dlt y$ and $\dlt x$ are indeed infeasibility certificates whenever the problem is strongly infeasible.
Identities in \eqref{eqn:ppa:residUpdates} show that, when $(\dlt y,\dlt x)=(0,0)$, the optimality conditions \eqref{eqn:kkt} are satisfied in the limit.
Otherwise, \Prop~\ref{prop:infeas}, \Prop~\ref{prop:ppa:limits1}, and \Prop~\ref{prop:ppa:limits2} imply that problem~\eqref{eqn:primal} and/or its dual is strongly infeasible.

\begin{remark}
  Weak infeasibility of problem~\eqref{eqn:primal} means that the sets $\range{A}$ and $\setC$ do not intersect, but the distance between them is zero.
  In such cases, there exists no $\bar{y}\in\mcf{H}_2$ satisfying the conditions in \Prop~\ref{prop:infeas} and the algorithms studied in Sections~\ref{sec:dra}--\ref{sec:ppa} would yield $\dlt y_n \to \dlt y = 0$.
  A similar reasoning holds for the weak infeasibility of the dual problem for which the algorithms would yield $\dlt x_n \to \dlt x = 0$.
\end{remark}

\section*{Acknowledgements}
This project has received funding from the European Research Council (ERC) under the European Union's Horizon 2020 research and innovation programme grant agreement OCAL, No.\ 787845.

\bibliography{refs}

\end{document}